\numberwithin{equation}{section}
\theoremstyle{plain}
\newtheorem{thm}{\protect\theoremname}[section]
\theoremstyle{definition}
\newtheorem{defn}[thm]{\protect\definitionname}
\theoremstyle{plain}
\newtheorem*{prop*}{\protect\propositionname}
\theoremstyle{plain}
\newtheorem{prop}[thm]{\protect\propositionname}
\theoremstyle{remark}
\newtheorem{rem}[thm]{\protect\remarkname}
\theoremstyle{plain}
\newtheorem{cor}[thm]{\protect\corollaryname}
\theoremstyle{definition}
\newtheorem{example}[thm]{\protect\examplename}
\theoremstyle{plain}
\newtheorem{lem}[thm]{\protect\lemmaname}
\DeclareMathOperator{\E}{{\mathds E}}
\DeclareMathOperator{\var}{var}		
\DeclareMathOperator{\cov}{cov}		
\DeclareMathOperator{\corr}{corr}		
\newcommand{\one}{{\mathds 1}} 		
\providecommand{\corollaryname}{Corollary}
\providecommand{\definitionname}{Definition}
\providecommand{\examplename}{Example}
\providecommand{\lemmaname}{Lemma}
\providecommand{\propositionname}{Proposition}
\providecommand{\remarkname}{Remark}
\providecommand{\theoremname}{Theorem}
\begin{document}
\title{Uniform Function Estimators in Reproducing~Kernel~Hilbert~Spaces}
\author{Paul Dommel\footnotemark[1] \and Alois Pichler\thanks{Technische Universit\"at Chemnitz, Faculty of mathematics. 90126 Chemnitz,
Germany\protect \\
DFG, German Research Foundation \textendash{} Project-ID 416228727
\textendash{} SFB~1410\protect \\
\protect\includegraphics[width=0.8em]{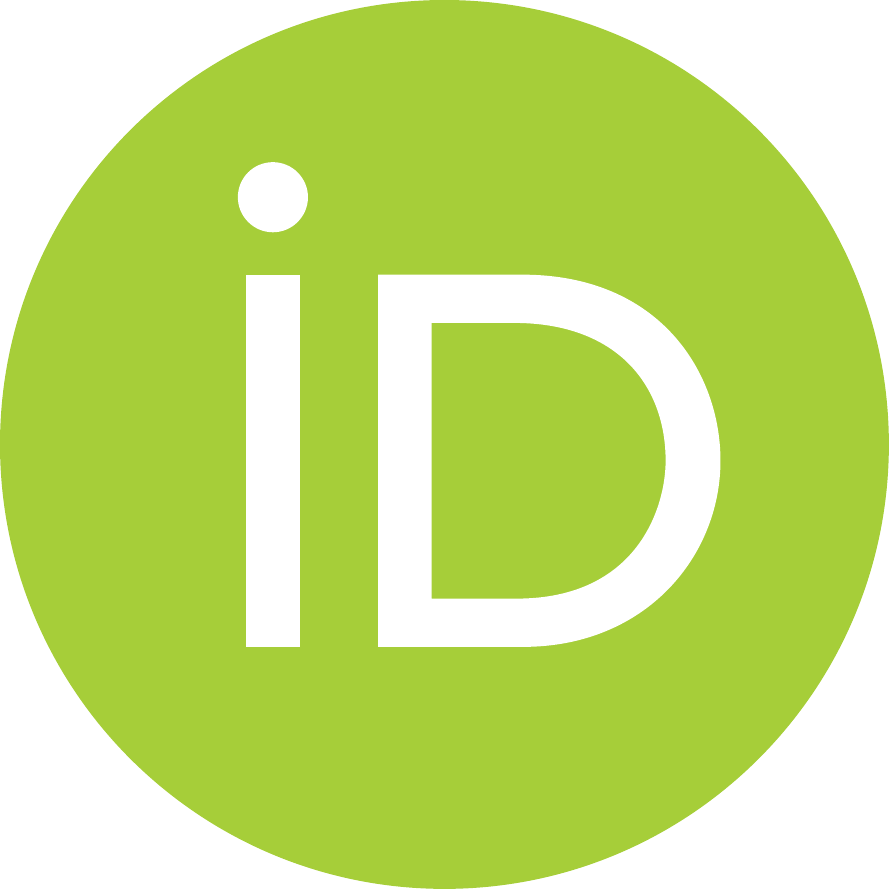}
\protect\href{https://orcid.org/0000-0001-8876-2429}{orcid.org/0000-0001-8876-2429}.
Contact: \protect\href{mailto:alois.pichler@math.tu-chemnitz.de}{alois.pichler@math.tu-chemnitz.de}} }
\maketitle
\begin{abstract}
This paper addresses the problem of regression to reconstruct functions,
which are observed with superimposed errors at random locations. We
address the problem in reproducing kernel Hilbert spaces. It is demonstrated
that the estimator, which is often derived by employing Gaussian random
fields, converges in the mean norm of the reproducing kernel Hilbert
space to the conditional expectation and this implies local and uniform
convergence of this function estimator. By preselecting the kernel,
the problem does not suffer from the curse of dimensionality.

The paper analyzes the statistical properties of the estimator. We
derive convergence properties and provide a conservative rate of convergence
for increasing sample sizes. \medskip{}

\noindent \textbf{Keywords:} Reproducing kernel Hilbert spaces {\tiny\textbullet} positive
definite functions {\tiny\textbullet}  Gramian matrix~{\tiny\textbullet}  Mercer kernels {\tiny\textbullet}  Statistical
learning theory

\noindent \textbf{Classification:} 62G05, 62G08, 62G20 
\end{abstract}

\section{Introduction}

This paper addresses the problem of regression and approximation,
nowadays occasionally often associated with the term statistical learning.
The specific estimator we consider is based on kernel functions. We
investigate the estimator's convergence properties in the the genuine
and most natural norm, the norm induced by the kernel function itself. 

The estimator is often derived by involving Gaussian random fields
and is central in support vector machines as well, an additional motivational
point to investigate its specific properties. Here, the estimator
is often inferred with least squares errors and by involving a regularization
term based on a reproducing kernel Hilbert space. The literature frequently
employs loss and risk functionals, and involves an $L^{2}$\nobreakdash-error
to investigate this estimator. Our results complement these research
directions by adding the natural, genuine norm. They enable us to
establish uniform convergence of the estimator by moderately regularizing
the objectives. This uniform convergence is indeed essential and crucial
for applications in stochastic optimization. 

Explicit convergence rates are presented for increasing sample sizes.
The results and convergence rates correspond to other rates known
from non-parametric statistics, particularly to density estimation
when employing the mean (integrated) squared error. Starting with
a fixed kernel, the results presented here do not depend on the dimension
of the design space, so they do not suffer from what is occasionally
addressed by the catchphrase \emph{curse of dimensionality}.

\medskip{}

\citet{Cucker2007} provide an introduction to approximation theory
in a random framework. The excellent book \citet[Section~2.3]{Bishop2006}
gives very concrete applications in statistical learning theory, while
\citet{Wendland2004} provide the mathematical foundations for approximations
in reproducing kernel Hilbert spaces. The monograph \citet{SteinwartChristmann}
introduces to support vector machines, which employ kernel functions
similarly to our approach presented below. A study, comparably to
ours but employing a simpler norm, is \citet{ZhangDuchiWainwright}.
\citet{Caponnetto2006} provide the state of the art for an analysis
in $L^{2}$ involving the kernel operator, see also \citet{Gyoerfi2002}. 

\paragraph{Outline of the paper. }

The following Section~\ref{sec:Regularization} repeats elements
from reproducing kernel Hilbert spaces, which are of importance throughout
this paper. Section~\ref{sec:Approx} introduces the elementary estimator,
which is employed in statistical learning. Sample average approximation
(Section~\ref{sec:SAA}) address this estimator with random samples
from both dimensions and Section~\ref{eq:56} reveals related statistical
results. The Sections~\ref{sec:InNorm} and~\ref{sec:Convergence}
derive our main results, which is, for short, convergence of the sample
average optimizer in mean norm and weak consistency (Section~\ref{sec:Consistency})
of this estimator. Section~\ref{sec:Summary} concludes with a summary.

\section{\label{sec:Regularization}Regularization with reference to reproducing
kernel Hilbert spaces}

Throughout we shall expose the problem on the design space $\mathcal{X}$,
an arbitrary set for which we require more structure later; most typically,~$\mathcal{X}$
is a subset of~$\mathbb{R}^{d}$. Let $(X_{i},f_{i})$, $i=1,\dots,n$,
be independent, identically distributed observations in $\mathcal{X}\times\mathbb{R}$
with joint probability measure~$\mathcal{\rho}$. For a kernel function
$k\colon\mathcal{X}\times\mathcal{X}\to\mathbb{R}$ we consider the
estimator 
\begin{equation}
\hat{f}_{n}(\cdot)=\frac{1}{n}\sum_{i=1}^{n}k(\cdot,X_{i})\,\hat{w}_{i},\label{eq:Estimator}
\end{equation}
where the weights $\hat{w}_{i}$ satisfy the system of linear equations
\begin{equation}
\lambda_{n}\,\hat{w}_{i}+\frac{1}{n}\sum_{j=1}^{n}k(X_{i},X_{j})\,\hat{w}_{j}=f_{i},\qquad i=1,\dots,n,\label{eq:17}
\end{equation}
for some parameter~$\lambda_{n}$.\footnote{Note that $\hat{f}_{n}$ interpolates the data, $\hat{f}_{n}(X_{i})=f_{i}$,
$i=1,\dots,n$, for the particular choice $\lambda_{n}=0$.} In what follows we derive this estimator first by employing Gaussian
random fields and kernel ridge regression from support vector machines
and then investigate and expose its convergence properties. Specifically,
we identify and characterize the function~$f$ so that 
\begin{equation}
\E\|\hat{f}_{n}(\cdot)-f(\cdot)\|^{2}\to0\label{eq:Master}
\end{equation}
as $n\to\infty$, where $\|\cdot\|$ is a norm and $\lambda_{n}$
is chosen adequately; above all, we derive results for the norm of
the reproducing kernel Hilbert space associated with the kernel function.
We will also infer convergence results for $L^{2}$ and\textemdash most
importantly\textemdash for uniform function approximations, as point
evaluations are continuous in the kernel norm.

\subsection{Gaussian random fields}

As an initial motivation for the estimator~\eqref{eq:Estimator}
consider a zero mean Gaussian random field~$f$ on $\mathcal{X}$
with covariance function $k\colon\mathcal{X}\times\mathcal{X}\to\mathbb{R}$,
that is, $k(x,y)=\cov\big(f(x),f(y)\big)$. For a signal plus noise
model with observations 
\[
f_{i}=f(x_{i})+\varepsilon_{i},
\]
the joint distribution, including $x$ to the observation points $X=(x_{1},\dots,x_{n})$,
is 
\[
\begin{pmatrix}f(x)\\
f
\end{pmatrix}\sim\mathcal{N}\left(\begin{pmatrix}0\\
0
\end{pmatrix},\ \begin{pmatrix}k(x,x) & k(x,X)\\
k(X,x) & k(X,X)+\lambda
\end{pmatrix}\right),
\]
where $\varepsilon\sim\mathcal{N}(0,\lambda)$ is the independent
error and where we use the compact vector notation $f\coloneqq(f_{1},\dots,f_{n})^{\top}$
and $k(x,X)\coloneqq\big(k(x,x_{1}),\dots,k(x,x_{n})\big)$ for the
entry of the covariance matrix; the other entries are defined analogously.
With this, the conditional distribution is 
\[
f(x)\mid\big(f(X)=f\big)\sim\mathcal{N}\big(\hat{\mu}(x),\,\hat{K}(x)\big),
\]
where 
\begin{equation}
\hat{\mu}(x)\coloneqq k(x,X)\big(k(X,X)+\lambda\big)^{-1}f(X)\label{eq:1}
\end{equation}
is the mean and the variance is
\begin{equation}
\hat{K}(x)\coloneqq k(x,x)-k(x,X)\big(k(X,X)+\lambda\big)^{-1}k(X,x),\label{eq:Bandwidth}
\end{equation}
see \citet[Theorem~13.1]{Shiryaev1996} or \citet[Section~2.3]{Bishop2006}.
Expanding~\eqref{eq:1} and setting $\hat{f}_{n}(x)\coloneqq\hat{\mu}(x)$
reveals the initial estimator~\eqref{eq:Estimator} for variance~$\lambda$
rescaled. Figure~\ref{fig:Gaussian} displays an example of the estimator~$\hat{f}_{n}(\cdot)$
together with the range $\pm\sqrt{\hat{K}(\cdot)}$ from~\eqref{eq:Bandwidth}.

\subsection{Reproducing kernel Hilbert space }

Every estimator $\hat{f}_{n}(\cdot)$ in~\eqref{eq:Estimator} is
an element in the reproducing kernel Hilbert space spanned by the
functions $k(\cdot,y)$, $y\in\mathcal{X}$. While introducing the
notation for reproducing kernel Hilbert spaces here we briefly recall
major properties, which are essential in our following exposition.
For a general discussion on reproducing kernel Hilbert spaces we may
refer to \citet[Chapter~1]{Mandrekar2016}. 
\begin{defn}
The kernel is a symmetric and positive definite function $k\colon\mathcal{X}\times\mathcal{X}\to\mathbb{R}$.
On the linear span $\{k(\cdot,x)\colon\mathcal{X}\to\mathbb{R}\mid x\in\mathcal{X}\}$
of functions on $\mathcal{X}$, the inner product is defined by
\begin{equation}
\bigl\langle k(\cdot,x)\mid k(\cdot,y)\bigr\rangle_{k}\coloneqq k(x,y).\label{eq:InnerProduct}
\end{equation}
The reproducing kernel Hilbert space, denoted $\big(\mathcal{H}_{k},\|\cdot\|_{k}\big)$,
is the completion with respect to the norm $\|f\|_{k}^{2}\coloneqq\left\langle f\mid f\right\rangle _{k}$
induced by the inner product~\eqref{eq:InnerProduct}.
\end{defn}

Most importantly, point evaluations are continuous linear functions
in reproducing kernel Hilbert spaces. Indeed, finite linear combinations
$f(\cdot)=\sum_{i=1}^{n}k(\cdot,x_{i})\,w_{i}$ are dense in $\mathcal{H}_{k}$
and it follows from~\eqref{eq:InnerProduct} that 
\begin{equation}
\bigl\langle k(\cdot,x)\big|\,f\bigr\rangle_{k}=\sum_{i=1}^{n}w_{i}\,\bigl\langle k(\cdot,x)\big|\,k(\cdot,x_{i})\bigr\rangle_{k}=\sum_{i=1}^{n}w_{i}\,k(x,x_{i})=f(x).\label{eq:Stetig}
\end{equation}

Although more general settings are easily possible, in what follows
we convene to address only continuous and uniformly bounded kernel
functions~$k$. We associate the following Hilbert\textendash Schmidt
integral operator~$K$ with a kernel~$k$. 
\begin{defn}[Design measure]
\label{def:DesignMeasure}Let $\mathcal{X}$ be a measure space.
The marginal measure $P(\cdot)\coloneqq\rho(\cdot\times\mathbb{R})$
is the \emph{design measure}. 
\end{defn}

\begin{defn}
Let $k$ be a kernel. The operator $K\colon L^{2}(\mathcal{X})\to L^{2}(\mathcal{X})$
is 
\begin{equation}
K\,w(x)\coloneqq\int_{\mathcal{X}}k(x,y)\,w(y)\,P(dy),\label{eq:K}
\end{equation}
where $w\in L^{2}(\mathcal{X})$.
\end{defn}

\begin{prop*}
The operator~$K$ is self-adjoint and positive definite with respect
to the standard inner product 
\[
\left\langle f\mid g\right\rangle \coloneqq\int_{\mathcal{X}}f(z)\cdot g(z)\,P(dz)
\]
on $\big(L^{2},\|\cdot\|_{2}\big)$. The operator is positive definite
and bounded with norm 
\[
\|K\colon L^{2}\to L^{2}\|^{2}\le\iint_{\mathcal{X}^{2}}k(x,y)^{2}\,P(dx)P(dy).
\]
\end{prop*}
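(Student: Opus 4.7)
The statement has three parts---self-adjointness, positivity, and the norm bound on $K\colon L^{2}\to L^{2}$---and each is in principle a short consequence of Fubini together with either Cauchy--Schwarz or the finite-sum positive-definiteness of $k$. I would handle them in that order, the positivity step being the main obstacle.

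First, self-adjointness follows by writing $\left\langle Kf\mid g\right\rangle=\iint_{\mathcal{X}^{2}}k(x,y)\,f(y)\,g(x)\,P(dy)P(dx)$, which is absolutely convergent because $k$ is uniformly bounded and $f,g\in L^{2}$; Fubini then permits swapping the order of integration, and the symmetry $k(x,y)=k(y,x)$ identifies the iterated integral with $\left\langle f\mid Kg\right\rangle$. For the norm bound I would apply Cauchy--Schwarz pointwise in $x$ to obtain $|Kw(x)|^{2}\le\|w\|_{2}^{2}\int_{\mathcal{X}} k(x,y)^{2}\,P(dy)$, integrate in $x$, and invoke Fubini once more to conclude $\|K\|^{2}\le\iint_{\mathcal{X}^{2}} k(x,y)^{2}\,P(dx)P(dy)$. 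As a byproduct, the bilinear form $(f,g)\mapsto\left\langle Kf\mid g\right\rangle$ is continuous on $L^{2}\times L^{2}$, a fact I would exploit in the final step.

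The hard part is positivity, because positive-definiteness of $k$ is formulated at the level of finite point sets whereas we must show non-negativity of a double integral. My plan is an approximation argument: approximate $w\in L^{2}$ by a simple function $w_{m}=\sum_{i=1}^{m}c_{i}\,\one_{A_{i}}$ on a measurable partition $\{A_{i}\}$, for which
\[
\left\langle Kw_{m}\mid w_{m}\right\rangle=\sum_{i,j=1}^{m}c_{i}c_{j}\int_{A_{i}\times A_{j}}k(x,y)\,P(dx)P(dy).
\]
Refining the partition and using the continuity and uniform boundedness of $k$, the right-hand side is approximated arbitrarily well by finite sums $\sum_{i,j}k(x_{i},x_{j})\bigl(c_{i}\sqrt{P(A_{i})}\bigr)\bigl(c_{j}\sqrt{P(A_{j})}\bigr)$ with representatives $x_{i}\in A_{i}$, and each such sum is non-negative by the finite positive-definiteness of $k$. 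Passing to the limit first in the partition and then in the simple-function approximation, the latter using the $L^{2}$-continuity of $(f,g)\mapsto\left\langle Kf\mid g\right\rangle$ established in the previous paragraph, gives $\left\langle Kw\mid w\right\rangle\ge 0$ for every $w\in L^{2}$. An alternative probabilistic route that sidesteps the partition refinement is to sample $X_{1},\dots,X_{n}$ i.i.d.\ from $P$ and observe that the $V$-statistic $\frac{1}{n^{2}}\sum_{i,j=1}^{n}k(X_{i},X_{j})\,w(X_{i})\,w(X_{j})$ is non-negative for every $n$ and converges almost surely to $\left\langle Kw\mid w\right\rangle$ by the strong law of large numbers, delivering the same conclusion.
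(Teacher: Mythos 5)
Your proposal is correct, and for the one part the paper actually argues---the operator-norm bound---it is exactly the paper's argument: the paper's entire proof reads ``The assertion is a consequence of the Cauchy--Schwarz inequality,'' which is your pointwise estimate $|Kw(x)|^{2}\le\|w\|_{2}^{2}\int_{\mathcal{X}}k(x,y)^{2}\,P(dy)$ followed by integration in~$x$. Where you genuinely go beyond the paper is in supplying proofs of self-adjointness (Fubini plus symmetry of $k$, with absolute convergence guaranteed since $k$ is bounded and $P$ is a probability measure) and, above all, of positivity, which is the only nontrivial claim in the proposition and which the paper's one-liner does not address at all: passing from the finite-point-set positive definiteness of $k$ to $\left\langle Kw\mid w\right\rangle\ge0$ for general $w\in L^{2}$ does require either your partition-refinement argument or the $V$-statistic argument, and both are sound (for the latter, note the diagonal term $\frac{1}{n^{2}}\sum_{i}k(X_{i},X_{i})w(X_{i})^{2}$ is $O(1/n)$ almost surely and so does not disturb the limit). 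One cosmetic slip: the Riemann-type approximation of $\sum_{i,j}c_{i}c_{j}\int_{A_{i}\times A_{j}}k\,dP\,dP$ should carry coefficients $c_{i}P(A_{i})$, not $c_{i}\sqrt{P(A_{i})}$; either choice yields a nonnegative quadratic form, but only the former actually approximates the integral. Note also that what is proved is positive semidefiniteness, $\left\langle Kw\mid w\right\rangle\ge0$; strict positivity would need a strictly positive definite kernel, a distinction the paper glosses over as well.
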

\begin{proof}
The assertion is a consequence of the Cauchy\textendash Schwarz inequality.
\end{proof}
\begin{prop}
\label{prop:23}It holds that $\|k(\cdot,x)\|_{k}^{2}=k(x,x)$, 
\begin{equation}
\left\langle Kw\mid f\right\rangle _{k}=\left\langle w\mid f\right\rangle \quad\text{and}\quad\left\Vert Kw\right\Vert _{k}^{2}=\left\langle w\mid Kw\right\rangle .\label{eq:kNorm}
\end{equation}
\end{prop}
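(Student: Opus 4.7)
The plan is to prove the three identities in sequence, each reducing to the reproducing property~\eqref{eq:Stetig} via a Bochner integral representation of $Kw$. The first identity $\|k(\cdot,x)\|_{k}^{2}=k(x,x)$ is immediate from the defining inner product~\eqref{eq:InnerProduct} by setting $y:=x$.

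For the pairing identity I would first represent the image $Kw$ as a Bochner integral in $\mathcal{H}_{k}$,
\[
Kw=\int_{\mathcal{X}}k(\cdot,y)\,w(y)\,P(dy).
\]
This integral exists in $\mathcal{H}_{k}$ because, by the first identity and the standing boundedness assumption $k(y,y)\le\kappa$, one has $\|k(\cdot,y)\,w(y)\|_{k}\le\sqrt{\kappa}\,|w(y)|$; since $P$ is a probability measure (as the marginal of $\rho$), Cauchy--Schwarz gives $\int|w(y)|\,P(dy)\le\|w\|_{L^{2}(P)}<\infty$. That this Bochner integral coincides with the pointwise definition~\eqref{eq:K} is verified by evaluating at any $x$ via the continuous functional $\langle k(\cdot,x)\mid\cdot\rangle_{k}$ and invoking~\eqref{eq:Stetig}. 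Then, for any $f\in\mathcal{H}_{k}$, pulling the continuous linear functional $\langle\cdot\mid f\rangle_{k}$ through the Bochner integral and applying~\eqref{eq:Stetig} once more gives
\[
\langle Kw\mid f\rangle_{k}=\int_{\mathcal{X}}\langle k(\cdot,y)\mid f\rangle_{k}\,w(y)\,P(dy)=\int_{\mathcal{X}}f(y)\,w(y)\,P(dy)=\langle w\mid f\rangle.
\]
I note in passing that any RKHS function $f$ is automatically in $L^{2}(P)$, since $|f(x)|\le\sqrt{\kappa}\,\|f\|_{k}$ by Cauchy--Schwarz in $\mathcal{H}_{k}$ and $P$ is a probability measure.

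The third identity $\|Kw\|_{k}^{2}=\langle w\mid Kw\rangle$ then follows at once by substituting $f:=Kw\in\mathcal{H}_{k}$ in the pairing identity. The only non-routine step in the whole argument is justifying the Bochner representation of $Kw$ and the interchange of integral and inner product; this is precisely where boundedness of the kernel and finiteness of~$P$ enter, and with these assumptions in place the remainder is a direct application of the reproducing property.
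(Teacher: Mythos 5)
Your proof is correct, and it reaches the same three identities the paper wants, but by a somewhat different route. The paper verifies the pairing identity only against the dense family of finite combinations $f=\sum_{i}w_{i}'\,k(\cdot,x_{i})$, using linearity and the reproducing property applied to $Kw$, and leaves both the membership $Kw\in\mathcal{H}_{k}$ and the extension to general $f$ by continuity implicit. You instead realize $Kw$ as an $\mathcal{H}_{k}$-valued Bochner integral $\int k(\cdot,y)\,w(y)\,P(dy)$ and commute the continuous functional $\langle\cdot\mid f\rangle_{k}$ with it. What this buys you is twofold: the identity holds for \emph{every} $f\in\mathcal{H}_{k}$ in one step, with no density/limiting argument needed, and you obtain as a by-product an actual proof that $Kw$ lies in $\mathcal{H}_{k}$ --- a fact the paper's computation tacitly presupposes when it pairs $Kw$ with kernel sections. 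The price is that you must justify the Bochner integral, which you do correctly via the standing boundedness of $k$ and the fact that $P$ is a probability measure (so $w\in L^{2}(P)\subset L^{1}(P)$); the only point you gloss over is strong measurability of $y\mapsto k(\cdot,y)\,w(y)$, which is covered by the paper's standing continuity assumption on $k$ and is no worse an omission than anything in the original argument. Your derivation of the third identity by substituting $f\coloneqq Kw$ matches the paper's ``the other assertions are immediate.''
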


\begin{proof}
The functions $f(\cdot)=\sum_{i=1}^{n}w_{i}^{\prime}\,k(\cdot,x_{i})$
are dense in $\mathcal{H}_{k}$. By linearity, 
\begin{align*}
\left\langle Kw\mid f\right\rangle _{k} & =\sum_{i=1}^{n}\,w_{i}^{\prime}\int_{\mathcal{X}}\big<k(\cdot,y)\mid k(\cdot,x_{i})\big>_{k}w(y)\,P(dy)\\
 & =\int_{\mathcal{X}}\sum_{i=1}^{n}\,w_{i}^{\prime}\,k(y,x_{i})\,w(y)\,P(dy)\\
 & =\int_{\mathcal{X}}f(y)\,w(y)\,P(dy)\\
 & =\left\langle w\mid f\right\rangle .
\end{align*}
The other assertions are immediate. 
\end{proof}
\begin{rem}[Mercer\footnote{The initial publication is notably due to Schmidt, see \citet{ErhardSchmidt},
and not Mercer.} and the kernel trick]
\label{rem:Mercer}The operator $K$ is compact with $k(x,y)=\sum_{\ell=1}^{\infty}\sigma_{\ell}^{2}\,\phi_{\ell}(x)\,\phi_{\ell}(y)$,
where $\sigma_{\ell}^{2}$ is the eigenvalue corresponding to the
eigenfunction~$\phi_{\ell}(\cdot)$. In this setting, the operator
$K^{\nicefrac{1}{2}}$ is $K^{\nicefrac{1}{2}}f=\sum_{\ell=1}^{\infty}\sigma_{\ell}\,\phi_{\ell}\left\langle \phi_{\ell}\mid f\right\rangle $
(with $\sigma_{\ell}\ge0$), see \citet[Theorem~VI.23]{Reed1980}. 
\end{rem}

\begin{prop}[$K^{\nicefrac{1}{2}}\colon L^{2}\to\mathcal{H}_{k}$ is an isometry]
\label{prop:Norm}It holds that $\|K^{\nicefrac{1}{2}}f\|_{k}=\|f\|_{2}$
and $\|f\|_{2}\le\|K\|^{\nicefrac{1}{2}}\cdot\|f\|_{k}$.
\end{prop}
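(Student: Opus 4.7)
The plan is to reduce both assertions to the Mercer expansion from Remark~\ref{rem:Mercer}, leveraging the identities from Proposition~\ref{prop:23}.

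First, I would verify that the scaled eigenfunctions $\{\sigma_{\ell}\phi_{\ell}\}_{\ell}$ form an orthonormal system in $\mathcal{H}_{k}$. Setting $w=\phi_{\ell}$ and $f=\phi_{\ell^{\prime}}$ in the identity $\langle Kw\mid f\rangle_{k}=\langle w\mid f\rangle$ of Proposition~\ref{prop:23}, and using $K\phi_{\ell}=\sigma_{\ell}^{2}\phi_{\ell}$ together with the $L^{2}$-orthonormality of the eigenfunctions, I obtain $\sigma_{\ell}^{2}\,\langle\phi_{\ell}\mid\phi_{\ell^{\prime}}\rangle_{k}=\delta_{\ell,\ell^{\prime}}$; hence $\langle\sigma_{\ell}\phi_{\ell}\mid\sigma_{\ell^{\prime}}\phi_{\ell^{\prime}}\rangle_{k}=\delta_{\ell,\ell^{\prime}}$.

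For the equality $\|K^{\nicefrac{1}{2}}f\|_{k}=\|f\|_{2}$, I would expand $f\in L^{2}$ as $f=\sum_{\ell}\langle\phi_{\ell}\mid f\rangle\phi_{\ell}$ and apply the Mercer representation $K^{\nicefrac{1}{2}}f=\sum_{\ell}\sigma_{\ell}\phi_{\ell}\,\langle\phi_{\ell}\mid f\rangle$ recorded in Remark~\ref{rem:Mercer}. By the orthonormality just established, Parseval in $\mathcal{H}_{k}$ gives $\|K^{\nicefrac{1}{2}}f\|_{k}^{2}=\sum_{\ell}|\langle\phi_{\ell}\mid f\rangle|^{2}=\|f\|_{2}^{2}$. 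For the inequality, I would take any $f\in\mathcal{H}_{k}$ and expand it in the $\mathcal{H}_{k}$-orthonormal basis as $f=\sum_{\ell}c_{\ell}(\sigma_{\ell}\phi_{\ell})$. Then $\|f\|_{k}^{2}=\sum_{\ell}|c_{\ell}|^{2}$, while reading the very same sum in $L^{2}$ yields
\[
\|f\|_{2}^{2}=\sum_{\ell}|c_{\ell}|^{2}\sigma_{\ell}^{2}\le\sup_{\ell}\sigma_{\ell}^{2}\cdot\|f\|_{k}^{2}=\|K\|\cdot\|f\|_{k}^{2},
\]
using that the operator norm $\|K\|$ equals the largest eigenvalue $\sup_{\ell}\sigma_{\ell}^{2}$.

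The main obstacle is not the algebra but keeping the two Hilbert-space topologies straight. One has to know that elements of $\mathcal{H}_{k}$ are $L^{2}$-functions (this follows from continuity and uniform boundedness of $k$ plus the reproducing property), that the Mercer series converges in both $L^{2}$ and $\mathcal{H}_{k}$ on the relevant subspaces, and that the isometry statement is naturally read on $(\ker K)^{\perp}$, since any component of $f$ in $\ker K$ trivially violates the equality. All of these technicalities are supplied by Mercer's theorem under the standing assumptions.
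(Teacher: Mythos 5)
Your argument is correct, but it takes a different route from the one the paper actually writes out. The paper's proof opens by citing Mercer's theorem as the underlying reason and then gives a short, purely algebraic computation: for $f=K^{\nicefrac{1}{2}}w$ it applies the identity $\|Kw\|_{k}^{2}=\langle w\mid Kw\rangle$ from Proposition~\ref{prop:23} to get $\|K^{\nicefrac{1}{2}}f\|_{k}^{2}=\|Kw\|_{k}^{2}=\langle K^{\nicefrac{1}{2}}w\mid K^{\nicefrac{1}{2}}w\rangle=\|f\|_{2}^{2}$, and the bound $\|f\|_{2}^{2}=\langle w\mid Kw\rangle\le\|K\|\,\|w\|_{2}^{2}=\|K\|\,\|f\|_{k}^{2}$ follows from the same identity and self-adjointness. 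You instead carry out the spectral route the paper only gestures at: you establish that $\{\sigma_{\ell}\phi_{\ell}\}$ is orthonormal in $\mathcal{H}_{k}$ (your derivation of $\sigma_{\ell}^{2}\langle\phi_{\ell}\mid\phi_{\ell^{\prime}}\rangle_{k}=\delta_{\ell,\ell^{\prime}}$ from Proposition~\ref{prop:23} is correct, provided $\sigma_{\ell}>0$ so that $\phi_{\ell}\in\mathcal{H}_{k}$) and then invoke Parseval in both spaces. The paper's computation is shorter and sidesteps all convergence questions \textemdash{} it never needs completeness of $\{\phi_{\ell}\}$ in $L^{2}$ or of $\{\sigma_{\ell}\phi_{\ell}\}$ in $\mathcal{H}_{k}$, only the functional calculus for $K^{\nicefrac{1}{2}}$ \textemdash{} whereas your Parseval step for the isometry does need $\{\phi_{\ell}\}$ to span $L^{2}$ (equivalently $\ker K=\{0\}$), and your expansion $f=\sum_{\ell}c_{\ell}\sigma_{\ell}\phi_{\ell}$ for the inequality needs $\{\sigma_{\ell}\phi_{\ell}\}$ to be a complete basis of $\mathcal{H}_{k}$, which is genuinely Mercer's theorem and not merely the algebra of Proposition~\ref{prop:23}. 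You flag both caveats honestly, and in fairness the paper's own statement has the same blemish (the isometry is literally false on $\ker K$; the paper silently restricts to $f=K^{\nicefrac{1}{2}}w$). What your version buys is transparency: it makes visible exactly where the eigenvalue decay and the kernel of $K$ enter, and it yields the inequality for arbitrary $f\in\mathcal{H}_{k}$ rather than only for $f$ in the range of $K^{\nicefrac{1}{2}}$.
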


\begin{proof}
The assertion is a consequence of Mercer's theorem, cf.\ \citet{KoenigOperators}
and \citet[Corollary~4]{HeinBousquet}. However, for $f=K^{\nicefrac{1}{2}}w$
it follows from the preceding proposition that 
\[
\|K^{\nicefrac{1}{2}}f\|_{k}^{2}=\|Kw\|_{k}^{2}=\left\langle w\mid Kw\right\rangle =\left\langle K^{\nicefrac{1}{2}}w\mid K^{\nicefrac{1}{2}}w\right\rangle =\|f\|_{2}^{2}.
\]
With~\eqref{eq:kNorm} we have further that 
\[
\|f\|_{2}^{2}=\big<K^{\nicefrac{1}{2}}w\mid K^{\nicefrac{1}{2}}w\big>=\bigl\langle w\mid Kw\bigr\rangle\le\|K\|\,\|w\|_{2}^{2}=\|K\|\,\|K^{\nicefrac{1}{2}}w\|_{k}^{2}=\|K\|\,\|f\|_{k}^{2},
\]
as $K$ is self-adjoint. Hence the assertion.
\end{proof}
\begin{thm}[Continuity of the operator $K$]
It holds that $\|K\colon\mathcal{H}_{k}\to\mathcal{H}_{k}\|\le\|K\colon L_{2}\to L_{2}\|$.
\end{thm}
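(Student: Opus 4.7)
The plan is to reduce the statement to the two identities already collected in Proposition~\ref{prop:23} together with the norm comparison from Proposition~\ref{prop:Norm}, thereby avoiding any appeal to surjectivity of $K^{\nicefrac{1}{2}}\colon L^{2}\to\mathcal{H}_{k}$ or to a detailed Mercer expansion. Fix $f\in\mathcal{H}_{k}$. The norm comparison $\|f\|_{2}\le\|K\|^{\nicefrac{1}{2}}\|f\|_{k}$ guarantees $f\in L^{2}(\mathcal{X})$, so $f$ is a legitimate argument in the $L^{2}$-based identities of Proposition~\ref{prop:23}.

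The key step is to apply the second identity of~\eqref{eq:kNorm} with $w\coloneqq f$, which yields
\[
\|Kf\|_{k}^{2}=\bigl\langle f\mid Kf\bigr\rangle,
\]
where the right-hand side is the usual $L^{2}$ inner product. Then I bound this inner product by Cauchy\textendash Schwarz in $L^{2}$ and by the definition of the operator norm on $L^{2}$,
\[
\bigl\langle f\mid Kf\bigr\rangle\le\|f\|_{2}\cdot\|Kf\|_{2}\le\bigl\|K\colon L^{2}\to L^{2}\bigr\|\cdot\|f\|_{2}^{2}.
\]
Finally I invoke the comparison $\|f\|_{2}^{2}\le\|K\|\cdot\|f\|_{k}^{2}$ from Proposition~\ref{prop:Norm}, where here $\|K\|=\|K\colon L^{2}\to L^{2}\|$. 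Chaining the inequalities gives
\[
\|Kf\|_{k}^{2}\le\bigl\|K\colon L^{2}\to L^{2}\bigr\|^{2}\cdot\|f\|_{k}^{2},
\]
and taking the supremum over $f\in\mathcal{H}_{k}$ with $\|f\|_{k}=1$ delivers the claim.

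The main subtlety\textemdash not really an obstacle\textemdash is to confirm that the identities in Proposition~\ref{prop:23}, which were stated for $w\in L^{2}$ and $f\in\mathcal{H}_{k}$, are legitimately applied with $w=f\in\mathcal{H}_{k}\subset L^{2}$; this is exactly where the continuous embedding from Proposition~\ref{prop:Norm} is used. A conceptually equivalent but more computational alternative would be to diagonalize via Mercer (Remark~\ref{rem:Mercer}): expanding $f=\sum_{\ell}a_{\ell}\phi_{\ell}$ one checks $\|f\|_{k}^{2}=\sum a_{\ell}^{2}/\sigma_{\ell}^{2}$ and $\|Kf\|_{k}^{2}=\sum a_{\ell}^{2}\sigma_{\ell}^{2}$, so that $\|Kf\|_{k}^{2}\le(\sup_{\ell}\sigma_{\ell}^{2})^{2}\|f\|_{k}^{2}=\|K\colon L^{2}\to L^{2}\|^{2}\|f\|_{k}^{2}$. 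I prefer the first route since it uses only results already proved in the excerpt.
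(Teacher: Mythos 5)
Your argument is correct and is essentially the paper's own proof: the authors likewise write $\|Kf\|_{k}^{2}=\langle f\mid Kf\rangle\le\|K\|\,\|f\|_{2}^{2}\le\|K\|^{2}\|f\|_{k}^{2}$ using~\eqref{eq:kNorm} and Proposition~\ref{prop:Norm}. Your only additions are to spell out the intermediate Cauchy--Schwarz step and to note explicitly that the embedding $\mathcal{H}_{k}\subset L^{2}$ justifies taking $w=f$ in~\eqref{eq:kNorm}, a point the paper leaves implicit.
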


\begin{proof}
With~\eqref{eq:kNorm} and Proposition~\ref{prop:Norm}, $\|Kf\|_{k}^{2}=\left\langle f\mid Kf\right\rangle \le\|K\|\,\|f\|_{2}^{2}\le\|K\|^{2}\|f\|_{k}^{2}$
and hence the assertion.
\end{proof}
We have seen in~\eqref{eq:Stetig} that point evaluations are linear
functionals. We shall conclude here by relating these norms to uniform
convergence.
\begin{prop}
The point evaluation is continuous; indeed, $|f(x)|\le\sqrt{k(x,x)}\,\|f\|_{k}$
for all $x\in\mathcal{X}$ and $f\in\mathcal{H}_{k}$. Further,\footnote{\label{fn:Support}The support of the measure $P$ is $\operatorname{supp}P\coloneqq\bigcap\big\{ A\colon A\text{ is closed and }P(A)=1\big\}\subset\mathcal{X}$,
cf.\ \citet{Rueschendorf}.}
\begin{equation}
\|f\|_{\infty}\le\|f\|_{k}\cdot\sup_{x\in\operatorname{supp}P}\sqrt{k(x,x)},\label{eq:Point}
\end{equation}
where $\|f\|_{\infty}\coloneqq\sup_{x\in\operatorname{supp}P}|f(x)|$.
\end{prop}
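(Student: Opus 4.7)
The plan is to deduce both inequalities from the reproducing identity~\eqref{eq:Stetig} together with the Cauchy--Schwarz inequality, and then extend from the dense subspace of finite linear combinations to the whole completion $\mathcal{H}_k$.

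First I would fix $x\in\mathcal{X}$ and work on the dense subspace $\spann\{k(\cdot,y)\colon y\in\mathcal{X}\}$. For any $f$ in this subspace, equation~\eqref{eq:Stetig} gives the reproducing identity $f(x)=\bigl\langle k(\cdot,x)\bigm|f\bigr\rangle_{k}$. Applying Cauchy--Schwarz in $\mathcal{H}_k$ and using $\|k(\cdot,x)\|_{k}^{2}=k(x,x)$ from Proposition~\ref{prop:23}, I obtain
\[
|f(x)|\le\|k(\cdot,x)\|_{k}\,\|f\|_{k}=\sqrt{k(x,x)}\,\|f\|_{k}.
\]
This simultaneously proves the pointwise bound on the dense subspace and shows that the linear functional $f\mapsto f(x)$ is bounded there with operator norm at most $\sqrt{k(x,x)}$.

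Second, I would extend the estimate to all of $\mathcal{H}_k$. Given $f\in\mathcal{H}_k$ choose a Cauchy sequence $(f_m)$ in the dense subspace with $\|f_m-f\|_{k}\to 0$. The inequality above applied to $f_m-f_{m'}$ shows that $\bigl(f_m(x)\bigr)_{m}$ is Cauchy in $\mathbb{R}$ for every $x$, so its limit defines the value $f(x)$ that represents the element of the completion. Passing $m\to\infty$ in $|f_m(x)|\le\sqrt{k(x,x)}\,\|f_m\|_{k}$ yields the desired inequality $|f(x)|\le\sqrt{k(x,x)}\,\|f\|_{k}$ for every $f\in\mathcal{H}_k$ and every $x\in\mathcal{X}$; this is also the statement that the point evaluation is a bounded, hence continuous, linear functional on $\mathcal{H}_k$.

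Finally, to obtain~\eqref{eq:Point} I would simply take the supremum over $x\in\operatorname{supp}P$ on both sides, factoring $\|f\|_{k}$ which is independent of $x$. No obstacle of substance arises here; the only subtle point is that the reproducing identity~\eqref{eq:Stetig} as stated in the paper is formulated for finite linear combinations, so the density/continuous extension argument above is essential to justify the pointwise bound for arbitrary $f\in\mathcal{H}_k$ and to make the value $f(x)$ unambiguously defined on the completion.
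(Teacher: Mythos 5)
Your proof is correct and follows essentially the same route as the paper's: the reproducing identity~\eqref{eq:Stetig} combined with the Cauchy--Schwarz inequality and $\|k(\cdot,x)\|_{k}^{2}=k(x,x)$ from Proposition~\ref{prop:23}, then a supremum over $x\in\operatorname{supp}P$. The only difference is that you spell out the density/continuous-extension step from finite linear combinations to all of $\mathcal{H}_k$, which the paper leaves implicit by calling the statement ``immediate''; this is a reasonable and standard completion of the argument rather than a different approach.
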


\begin{proof}
The statement is immediate from~\eqref{eq:Stetig}, as 
\[
|f(x)|=\big|\big<k(\cdot,x)\big|\,f\big>_{k}\big|\le\|k(\cdot,x)\|_{k}\,\|f\|_{k}=\sqrt{k(x,x)}\,\|f\|_{k}
\]
by the Cauchy\textendash Schwartz inequality and Proposition~\ref{prop:23}. 
\end{proof}

\section{\label{sec:Approx}The genuine approximation problem}

In what follows we characterize the estimator~\eqref{eq:Estimator}
by involving a stochastic optimization problem. We consider the problem
first in its continuous form and relate it to the data subsequently.

By the disintegration theorem (see \citet{Dellacherie1978} or \citet{Ambrosi2005})
there is a family of measures $\rho(\cdot\mid x)\colon\mathcal{B}(\mathcal{X})\to[0,1]$,
$x\in\mathcal{X}$, on the Borel sets $\mathcal{B}(\mathcal{X})$
so that 
\[
\rho(A\times B)=\int_{A}\rho(B|\,x)\,P(dx),\quad A\subset\mathcal{X},B\subset\mathbb{R}\text{ measurable},
\]
where $P(\cdot)=\rho(\cdot\times\mathbb{R})$ is the design measure,
see Definition~\ref{def:DesignMeasure}. For a random variable $(X,f)$
with law~$\rho$ we recall the notational variants 
\[
\E g(X,f)=\iint_{\mathcal{X}\times\mathbb{R}}g(x,f)\,\rho(dx,df)=\int_{\mathcal{X}}g(x,f)\,\rho(df|\,x)\,P(dx)=\E\E\big(g(X,f)|\,X\big),
\]
where $g$ is measurable and 
\[
\E\big(g(x,f)|\,x\big)=\int_{\mathcal{X}}g(x,f)\,\rho(df|\,x),\qquad x\in\mathcal{X},
\]
is the conditional expectation.

\subsection{The continuous problem}

For the random variable $(X,f)$ with values in $\mathcal{X}\times\mathbb{R}$,
law $\rho$ and $f\in L^{2}$ consider the optimization problem 
\begin{equation}
\min_{f_{\lambda}(\cdot)\in\mathcal{H}_{k}}\E\big(f-f_{\lambda}(X)\big)^{2}+\lambda\left\Vert f_{\lambda}\right\Vert _{k}^{2},\label{eq:Genuine}
\end{equation}
where $\lambda>0$ is a fixed regression parameter. The objective~\eqref{eq:Genuine}
is strictly convex in $\|\cdot\|_{k}$, so that convergence can be
established for both, the optimal value and its optimizer, provided
that $\lambda>0$ is fixed.

The random variable $f_{\lambda}(X)$ is measurable with respect to
$\sigma(X)$, the $\sigma$\nobreakdash-algebra generated by~$X$,
and the random variable $\E(f\mid X)$ is the projection of~$f$
onto the closed subspace $L^{2}\big(\sigma(X)\big)$, see \citet{Kallenberg2002Foundations}.
By the Pythagorean theorem, the objective in the preceding problem
thus is equivalently 
\[
\min_{f_{\lambda}(\cdot)}\E\big(f-\E(f\mid X)\big)^{2}+\E\big(\E(f\mid X)-f_{\lambda}(X)\big)^{2}+\lambda\left\Vert f_{\lambda}\right\Vert _{k}^{2}.
\]
It follows from the Doob\textendash Dynkin lemma that there is a Borel
function $f_{0}\colon\mathcal{X}\to\mathbb{R}$ so that $\E(f\mid X)=f_{0}(X)$.
We follow the convention and denote this function also as 
\begin{equation}
f_{0}(x)=\E(f\mid X=x).\label{eq:Ef}
\end{equation}
The orthogonality relation characterizing~$f_{0}$ is 
\begin{equation}
\E\big(f-f_{0}(X)\big)g(X)=0,\label{eq:Orthogonal}
\end{equation}
where $g\colon\mathcal{X}\to\mathbb{R}$ is any measurable test function.
The objective of the optimization problem~\eqref{eq:Genuine} thus
is 
\begin{equation}
\vartheta^{*}\coloneqq\E\big(f-f_{0}(X)\big)^{2}+\min_{f_{\lambda}(\cdot)}\E\big(f_{0}(X)-f_{\lambda}(X)\big)^{2}+\lambda\left\Vert f_{\lambda}\right\Vert _{k}^{2},\label{eq:Theta*}
\end{equation}
where the quantity $\E\big(f-f_{0}(X)\big)^{2}$ is the \emph{irreducible
error}.
\begin{rem}
\label{rem:Closed}We  note that $f_{0}\in L^{2}\big(\sigma(X)\big)$,
but $f_{0}$~is \emph{not necessarily} in $\mathcal{H}_{k}$.
\end{rem}

\begin{thm}
\label{thm:Optimal}The solution of the optimization problem~\eqref{eq:Genuine}
is 
\begin{equation}
f_{\lambda}=Kw_{\lambda},\label{eq:64}
\end{equation}
where $(\lambda+K)w_{\lambda}=f_{0}$; the objective is
\begin{align}
\vartheta^{*} & =\left\Vert f-f_{0}\right\Vert ^{2}+\left\Vert f_{0}-f_{\lambda}\right\Vert ^{2}+\lambda\left\Vert f_{\lambda}\right\Vert _{k}^{2}\nonumber \\
 & =\left\Vert f-f_{0}\right\Vert ^{2}+\lambda\left\langle w_{\lambda}\mid Kw_{\lambda}\right\rangle +\lambda^{2}\left\Vert w_{\lambda}\right\Vert ^{2}.\label{eq:37}
\end{align}
\end{thm}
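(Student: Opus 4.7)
The plan is to build on the Pythagorean decomposition already worked out just before the theorem statement, which reduces the task to minimizing
\[
J(f_\lambda) \coloneqq \|f_0 - f_\lambda\|^2 + \lambda\|f_\lambda\|_k^2
\]
over $f_\lambda \in \mathcal{H}_k$. Since $\lambda > 0$, $J$ is strictly convex and coercive in $\|\cdot\|_k$, so a unique minimizer exists and can be pinned down by a first-order condition.

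To derive the Euler--Lagrange equation I would compute the G\^ateaux derivative of $J$ in an arbitrary direction $h\in\mathcal{H}_k$. The regularization term is trivially differentiable, and the fitting term is differentiable as well because point evaluations $h\mapsto h(X)$ are continuous linear functionals on $\mathcal{H}_k$ by~\eqref{eq:Point}. Rewriting $\E\bigl((f_0(X)-f_\lambda(X))h(X)\bigr)=\langle f_0-f_\lambda\mid h\rangle$ as an $L^2(P)$ pairing, the stationarity condition reads
\[
\langle f_0 - f_\lambda \mid h\rangle = \lambda\langle f_\lambda \mid h\rangle_k, \qquad h \in \mathcal{H}_k.
\]
The crux is now to invoke Proposition~\ref{prop:23}: the identity $\langle w\mid h\rangle=\langle Kw\mid h\rangle_k$ applied to $w=f_0-f_\lambda$ converts the $L^2$-pairing on the left into a pairing in $\mathcal{H}_k$. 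Since the resulting equality holds for every $h\in\mathcal{H}_k$, I conclude $K(f_0-f_\lambda)=\lambda f_\lambda$ in $\mathcal{H}_k$. Setting $w_\lambda\coloneqq(f_0-f_\lambda)/\lambda$ then immediately yields both $f_\lambda=Kw_\lambda$ and $(\lambda+K)w_\lambda=f_0$, which is~\eqref{eq:64}.

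For the closed form~\eqref{eq:37}, the relation $f_0-f_\lambda=\lambda w_\lambda$ gives $\|f_0-f_\lambda\|^2=\lambda^2\|w_\lambda\|^2$ at once, while Proposition~\ref{prop:23} gives $\|f_\lambda\|_k^2=\|Kw_\lambda\|_k^2=\langle w_\lambda\mid Kw_\lambda\rangle$, and substitution completes the formula. The mild subtlety I anticipate is verifying that $f_0-f_\lambda\in L^2(P)$ so that $K$ may legitimately be applied: this holds because $f_0$ is a square-integrable conditional expectation and $f_\lambda\in\mathcal{H}_k\subset L^2$ by Proposition~\ref{prop:Norm}. Beyond that, the argument is a routine strictly-convex variational calculation.
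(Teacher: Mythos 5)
Your argument is correct, and it reaches the same first-order equation $(\lambda+K)w_{\lambda}=f_{0}$ by a variational route that differs from the paper's in one worthwhile respect. The paper substitutes the ansatz $f_{\lambda}=Kw_{\lambda}$ into the objective, obtaining $g(w)=\|f_{0}-Kw\|^{2}+\lambda\langle w\mid Kw\rangle$, and then expands the second difference $g(w_{\lambda}+h)-g(w_{\lambda})$ explicitly: the linear part vanishes exactly when $(\lambda+K)w_{\lambda}=f_{0}$ and the remainder $\langle Kh\mid Kh\rangle+\lambda\langle h\mid Kh\rangle$ is nonnegative, so optimality is \emph{verified} directly without any existence discussion. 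The price is that the competitors are implicitly restricted to the range of $K$ (perturbations of the form $Kh$), and the paper defers to the representer theorem in spirit to justify this restriction. You instead take the Gateaux derivative over all of $\mathcal{H}_{k}$ and use the adjoint identity $\langle Kw\mid h\rangle_{k}=\langle w\mid h\rangle$ of Proposition~\ref{prop:23} to convert the $L^{2}$-pairing into a $\mathcal{H}_{k}$-pairing, which yields $K(f_{0}-f_{\lambda})=\lambda f_{\lambda}$ against \emph{arbitrary} directions $h\in\mathcal{H}_{k}$; this buys optimality over the whole space at the cost of having to assert existence of a minimizer (your strict convexity, coercivity and continuity of $J$ on $\mathcal{H}_{k}$ do cover this, and convexity makes the stationary point globally optimal in any case). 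Your derivation of~\eqref{eq:37} from $f_{0}-f_{\lambda}=\lambda w_{\lambda}$ and $\|Kw_{\lambda}\|_{k}^{2}=\langle w_{\lambda}\mid Kw_{\lambda}\rangle$ coincides with the paper's. One small point worth stating explicitly in your write-up: the conclusion $K(f_{0}-f_{\lambda})=\lambda f_{\lambda}$ is an identity in $\mathcal{H}_{k}$, and $w_{\lambda}\coloneqq(f_{0}-f_{\lambda})/\lambda$ lives in $L^{2}$ rather than $\mathcal{H}_{k}$ (consistent with Remark~\ref{rem:Closed}), which is exactly the regularity the paper assigns to the coefficient function in~\eqref{eq:35}.
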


\begin{proof}
With~\eqref{eq:kNorm} we may rewrite the objective in~\eqref{eq:Theta*}
by $g(w_{\lambda})\coloneqq\left\Vert f_{0}-Kw_{\lambda}\right\Vert ^{2}+\lambda\left\langle w_{\lambda}\mid Kw_{\lambda}\right\rangle $.
Now note that 
\begin{align*}
g(w_{\lambda}+h)-g(w_{\lambda}) & =\left\langle f_{0}-Kw_{\lambda}-Kh)\mid f_{0}-Kw_{\lambda}-Kh\right\rangle +\lambda\left\langle w_{\lambda}+h\mid K(w_{\lambda}+h)\right\rangle \\
 & \qquad-\left\langle f_{0}-Kw_{\lambda}\mid f_{0}-Kw_{\lambda}\right\rangle -\lambda\left\langle w_{\lambda}\mid Kw_{\lambda}\right\rangle \\
 & =-\left\langle Kh\mid f_{0}-Kw_{\lambda}\right\rangle -\left\langle f_{0}-Kw_{\lambda}\mid Kh\right\rangle +\left\langle Kh\mid Kh\right\rangle \\
 & \qquad+\lambda\left\langle h\mid Kw_{\lambda}\right\rangle +\lambda\left\langle h\mid Kw_{\lambda}\right\rangle +\lambda\left\langle h\mid Kh\right\rangle \\
 & =-2\left\langle Kh\mid f_{0}-Kw_{\lambda}-\lambda w_{\lambda}\right\rangle +\left\langle Kh\mid Kh\right\rangle +\lambda\left\langle h\mid Kh\right\rangle 
\end{align*}
as $K$ is self-adjoint. The first, linear term vanishes if $(\lambda+K)w_{\lambda}=f_{0}$,
and the second is quadratic in~$h$ \textendash{} hence the infimum
and the first assertion. For the objective~\eqref{eq:37} note that
$f_{0}-f_{\lambda}=\lambda\,w_{\lambda}$, see also~\eqref{eq:36}
below.
\end{proof}
\begin{cor}[Characterization of the coefficient function]
\label{cor:33}Suppose that 
\begin{equation}
(\lambda+K)w_{\lambda}=f_{0},\label{eq:35}
\end{equation}
then 
\begin{equation}
f_{\lambda}\coloneqq Kw_{\lambda}=(\lambda+K)^{-1}Kf_{0}\label{eq:34}
\end{equation}
 solves the Fredholm equation of the second kind $(\lambda+K)f_{\lambda}=Kf_{0}$
and it holds that
\begin{equation}
f_{0}-f_{\lambda}=\lambda\,w_{\lambda}.\label{eq:36}
\end{equation}
\end{cor}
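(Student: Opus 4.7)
The plan is to show that all three claims follow by straightforward algebraic manipulation from the hypothesis $(\lambda+K)w_\lambda = f_0$, using only self-adjointness of $K$ and invertibility of $\lambda+K$.

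First I would observe that $\lambda+K$ is invertible on $L^{2}(\mathcal{X})$: by the proposition before Proposition~\ref{prop:23}, $K$ is self-adjoint and positive semidefinite, so its spectrum is contained in $[0,\|K\|]$, and therefore the spectrum of $\lambda+K$ lies in $[\lambda,\lambda+\|K\|]\subset(0,\infty)$ since $\lambda>0$. Hence $(\lambda+K)^{-1}$ exists as a bounded self-adjoint operator, and it commutes with $K$ (both are functions of $K$ in the functional calculus, or one can verify commutation directly from $K(\lambda+K)=\lambda K+K^{2}=(\lambda+K)K$).

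Next, for~\eqref{eq:34} I would invert the hypothesis to obtain $w_\lambda=(\lambda+K)^{-1}f_0$, whence
\[
f_\lambda = Kw_\lambda = K(\lambda+K)^{-1}f_0 = (\lambda+K)^{-1}Kf_0,
\]
the last equality being the commutation noted above. Applying $\lambda+K$ to both sides then yields $(\lambda+K)f_\lambda = Kf_0$, which is the Fredholm equation of the second kind asserted in the statement.

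Finally, for~\eqref{eq:36} I would simply rearrange the hypothesis: from $(\lambda+K)w_\lambda = f_0$ we get $\lambda w_\lambda + Kw_\lambda = f_0$, i.e.\ $\lambda w_\lambda + f_\lambda = f_0$, so $f_0-f_\lambda = \lambda w_\lambda$. There is no genuine obstacle here; the only point worth guarding against is forgetting to justify the existence of $(\lambda+K)^{-1}$, which must be addressed before writing down~\eqref{eq:34}.
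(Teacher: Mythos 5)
Your proof is correct and follows essentially the same elementary algebraic route as the paper, whose entire proof consists of applying $K$ to~\eqref{eq:35} to obtain $\lambda Kw_{\lambda}+KKw_{\lambda}=Kf_{0}$, i.e.\ $(\lambda+K)f_{\lambda}=Kf_{0}$. Your extra care in justifying the invertibility of $\lambda+K$ and the commutation $K(\lambda+K)^{-1}=(\lambda+K)^{-1}K$ is a reasonable supplement rather than a divergence, since the paper leaves the existence of $(\lambda+K)^{-1}$ in~\eqref{eq:34} implicit and, by applying $K$ directly, never actually needs the inverse to derive the Fredholm equation.
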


\begin{proof}
Apply $K$ to~\eqref{eq:35} to get $\lambda Kw_{\lambda}+KKw_{\lambda}=Kf_{0}$,
that is, $(\lambda+K)f_{\lambda}=Kf_{0}$.
\end{proof}
\begin{rem}
It follows from~\eqref{eq:34} that $f_{\lambda}\in\mathcal{H}_{k}$,
even more, $f_{\lambda}$ is in the image of $K$, although $f_{0}$
is \emph{not necessarily} in $\mathcal{H}_{k}$ (cf.\ Remark~\ref{rem:Closed}).
\end{rem}

The distance of the solution $f_{\lambda}$ to the function $f_{0}$
will be of importance in what follows. We have the following general
result.
\begin{prop}
\label{prop:f0}Suppose that $f_{0}$ is in the range of $K$. Then
there is a constant $C_{0}>0$ so that 
\[
\|f_{0}-f_{\lambda}\|_{k}\le C_{0}\,\lambda.
\]
\end{prop}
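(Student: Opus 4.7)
The plan is to reduce the proposition to an operator-norm estimate on $\mathcal{H}_k$. By identity~\eqref{eq:36} in Corollary~\ref{cor:33}, $f_0-f_\lambda = \lambda\,w_\lambda$, so
\[
\|f_0-f_\lambda\|_k = \lambda\,\|w_\lambda\|_k,
\]
and it suffices to show that $\|w_\lambda\|_k$ is bounded uniformly in $\lambda>0$ by a constant depending only on $f_0$.

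The hypothesis $f_0\in\range K$ furnishes some $w_0$ with $f_0=Kw_0$; to obtain the stated rate linear in $\lambda$ I would interpret this in the natural RKHS sense, $w_0\in\mathcal{H}_k$. Substituting $f_0=Kw_0$ into the defining equation~\eqref{eq:35} of $w_\lambda$ and using that $K$ commutes with its resolvent gives
\[
w_\lambda = (\lambda+K)^{-1}K\,w_0 = K(\lambda+K)^{-1}w_0.
\]
Hence the problem reduces to showing that the operator $K(\lambda+K)^{-1}$ has norm at most $1$ on $\mathcal{H}_k$, for then $C_0\coloneqq\|w_0\|_k$ does the job.

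To bound that operator norm I would invoke the spectral theorem. The operator~$K$ is self-adjoint and positive on $\mathcal{H}_k$: self-adjointness follows by two applications of Proposition~\ref{prop:23}, since for $f_1,f_2\in\mathcal{H}_k$
\[
\langle Kf_1\mid f_2\rangle_k = \langle f_1\mid f_2\rangle = \langle Kf_2\mid f_1\rangle_k,
\]
and positivity from $\langle Kf\mid f\rangle_k=\|f\|_2^2\ge 0$. Its spectrum therefore lies in $[0,\|K\|]$, and by functional calculus
\[
\|K(\lambda+K)^{-1}\|_{\mathcal{H}_k\to\mathcal{H}_k} \le \sup_{s\in[0,\|K\|]}\frac{s}{\lambda+s} \le 1.
\]
Equivalently, one can read off the same bound from Mercer's decomposition (Remark~\ref{rem:Mercer}): the rescaled eigenfunctions $\sigma_\ell\phi_\ell$ form an orthonormal basis of $\mathcal{H}_k$ on which $K$ acts by multiplication with $\sigma_\ell^2$, and expanding $w_0$ in this basis yields $\|w_\lambda\|_k\le\|w_0\|_k$ term-by-term.

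The one delicate point is the interpretation of ``in the range of~$K$''. The weaker reading $f_0=Kw_0$ with $w_0\in L^2$ only delivers a rate of order $\sqrt{\lambda}$ (the multiplier estimate becomes $s/(\lambda+s)^2\le 1/(4\lambda)$ in place of $s^2/(\lambda+s)^2\le 1$), so the stated linear rate genuinely requires the stronger source condition $w_0\in\mathcal{H}_k$, i.e.\ $f_0\in K(\mathcal{H}_k)$. Modulo that reading, the argument is otherwise a short functional-calculus estimate with no hidden obstacles.
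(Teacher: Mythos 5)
Your proof is correct and follows essentially the same route as the paper: both reduce via $f_0-f_\lambda=\lambda w_\lambda$ to bounding $\|w_\lambda\|_k$, both read the source condition as $f_0=Kw_0$ with $w_0\in\mathcal{H}_k$, and both conclude with $C_0=\|w_0\|_k$ from the contraction property of $(\lambda+K)^{-1}K$. The only cosmetic difference is that the paper establishes that contraction in $L^2$ after writing $w_0=K^{\nicefrac{1}{2}}w$ and invoking the isometry of Proposition~\ref{prop:Norm}, whereas you argue directly on $\mathcal{H}_k$ by functional calculus; your closing remark that the weaker reading $w_0\in L^2$ only yields the rate $\sqrt{\lambda}$ is also accurate and matches the paper's implicit choice.
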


\begin{proof}
As $f_{0}$ is in the range of $K$ there is some $w_{0}\in\mathcal{H}_{k}$
so that $f_{0}=K\,w_{0}$. For $w_{0}\in\mathcal{H}_{k}$ there is
further $w\in L^{2}$ so that $w_{0}=K^{\nicefrac{1}{2}}w$ by Proposition~\ref{prop:Norm}.
With~\eqref{eq:35} it holds that 
\[
w_{\lambda}=(\lambda+K)^{-1}f_{0}=(\lambda+K)^{-1}Kw_{0}=K^{\nicefrac{1}{2}}(\lambda+K)^{-1}Kw
\]
and thus, with Proposition~\ref{prop:Norm} again, 
\[
\|w_{\lambda}\|_{k}=\|(\lambda+K)^{-1}Kw\|_{2}\le\|w\|_{2},
\]
as $(\lambda+K)^{-1}K\le\one$ in Loewner order. With~\eqref{eq:36}
it follows that $\|f_{0}-f_{\lambda}\|_{k}=\lambda\|w_{\lambda}\|_{k}\le\lambda\|w\|_{2}$
and thus the assertion with the constant $C_{0}\coloneqq\|w\|_{2}=\|w_{0}\|_{k}$.
\end{proof}
The following corollary to Corollary~\ref{cor:33} provides the weight
functions with respect to the usual Lebesgue measure. We provide this
statement as it particularly useful to solving the Fredholm integral
equation~\eqref{eq:35} numerically (by employing the Nystr�m method,
for example, cf.\ \citet{BachLowRank}) to make the function~$f_{\lambda}$
available for computational purposes.
\begin{cor}[Coefficient function for measures with a density]
Suppose that $P$ has a density $p(\cdot)$ with respect to the Lebesgue
measure, $P(dx)=p(x)dx$, and the coefficient function $\tilde{w}_{\lambda}(\cdot)$
satisfies 
\begin{equation}
\lambda\,\tilde{w}_{\lambda}(x)+p(x)\cdot\int_{\mathcal{X}}k(x,y)\,\tilde{w}_{\lambda}(y)\,dy=p(x)\cdot g_{0}(x).\label{eq:62}
\end{equation}
Then the function $g_{\lambda}(\cdot)\coloneqq\int_{\mathcal{X}}k(\cdot,x)\,\tilde{w}_{\lambda}(x)\,dx$
solves the integral equation
\[
(\lambda+K)g_{\lambda}=Kg_{0}.
\]
\end{cor}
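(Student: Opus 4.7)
The plan is to reduce the claim to Corollary~\ref{cor:33} by a change of unknown that absorbs the density~$p$. Concretely, I would introduce $w_{\lambda}(x)\coloneqq\tilde w_{\lambda}(x)/p(x)$ on $\operatorname{supp}P$ (and set $w_\lambda$ to zero elsewhere). Dividing the defining equation~\eqref{eq:62} by $p(x)$ then produces
\[
\lambda\,w_{\lambda}(x)+\int_{\mathcal{X}}k(x,y)\,w_{\lambda}(y)\,p(y)\,dy=g_{0}(x),
\]
which, using $P(dy)=p(y)\,dy$ and the definition~\eqref{eq:K} of~$K$, is exactly $(\lambda+K)\,w_{\lambda}=g_{0}$.

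Next I would rewrite $g_{\lambda}$ under the same substitution:
\[
g_{\lambda}(x)=\int_{\mathcal{X}}k(x,y)\,\tilde w_{\lambda}(y)\,dy=\int_{\mathcal{X}}k(x,y)\,w_{\lambda}(y)\,P(dy)=K w_{\lambda}(x).
\]
At this point the hypothesis of Corollary~\ref{cor:33} is met verbatim (with $g_{0}$ playing the role of $f_{0}$), and its statement yields $(\lambda+K)g_{\lambda}=Kg_{0}$ at once.

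The only delicate point, and what I expect to be the genuine obstacle, is that the division by $p(x)$ is legitimate only on $\{p>0\}$. I would therefore argue $P$-almost everywhere, noting that both sides of~\eqref{eq:62} vanish on $\{p=0\}$ so that $w_{\lambda}$ may be assigned arbitrarily there without affecting any integral against~$P$; in particular $g_\lambda$ and the identity $(\lambda+K)w_\lambda=g_0$ in $L^{2}(\mathcal{X},P)$ are insensitive to this choice. Once this bookkeeping is settled, the algebraic reduction to Corollary~\ref{cor:33} is entirely routine and no further estimates are required.
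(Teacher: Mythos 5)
Your argument is correct, but it is not the route the paper takes. The paper never divides by the density: it simply multiplies \eqref{eq:62} by $k(y,x)$ and integrates with respect to the Lebesgue measure $dx$, which turns the three terms directly into $\lambda\,g_{\lambda}(y)$, $\int_{\mathcal{X}}k(y,x)\,g_{\lambda}(x)\,P(dx)$ and $\int_{\mathcal{X}}k(y,x)\,g_{0}(x)\,P(dx)$, i.e.\ into $(\lambda+K)g_{\lambda}=Kg_{0}$ pointwise in~$y$ in one line. You instead substitute $w_{\lambda}=\tilde{w}_{\lambda}/p$ to recover the operator equation $(\lambda+K)w_{\lambda}=g_{0}$ of \eqref{eq:35} and then invoke Corollary~\ref{cor:33}. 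What your version buys is conceptual clarity: it exhibits $\tilde{w}_{\lambda}(x)\,dx=w_{\lambda}(x)\,P(dx)$ as the same coefficient measure and makes the statement literally a corollary of Corollary~\ref{cor:33}, which is how the paper advertises it. What it costs is the bookkeeping on $\{p=0\}$ that you correctly flag. One small sharpening there: the relevant observation is not that $w_{\lambda}$ can be assigned arbitrarily on $\{p=0\}$ without affecting integrals against $P$ (the definition of $g_{\lambda}$ integrates $\tilde{w}_{\lambda}$ against \emph{Lebesgue} measure, not against $P$), but that \eqref{eq:62} itself forces $\lambda\,\tilde{w}_{\lambda}=0$, hence $\tilde{w}_{\lambda}=0$, on $\{p=0\}$, so that set contributes nothing to $\int_{\mathcal{X}}k(\cdot,x)\,\tilde{w}_{\lambda}(x)\,dx$ and the identification $g_{\lambda}=Kw_{\lambda}$ goes through. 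With that adjustment your reduction is complete; the paper's direct computation avoids the issue altogether.
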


\begin{proof}
Multiply equation~\eqref{eq:62} by $k(y,x)$ and integrate with
respect to $dx$ to get 
\[
\lambda\int_{\mathcal{X}}k(y,x)\,\tilde{w}_{\lambda}(x)\,dx+\int_{\mathcal{X}}k(y,x)\cdot\int_{\mathcal{X}}k(x,z)\,\tilde{w}_{\lambda}(z)\,dz\,p(x)dx=\int_{\mathcal{X}}k(y,x)\,g_{0}(x)\,p(x)dx.
\]
This is 
\[
\lambda\,g_{\lambda}(y)+\int_{\mathcal{X}}k(y,x)\,g_{\lambda}(x)\,P(dx)=\int_{\mathcal{X}}k(y,x)\,g_{0}(x)\,P(dx),
\]
or $(\lambda+K)g_{\lambda}=Kg_{0}$, the assertion.

\end{proof}

\subsection{\label{sec:SAA}The discrete problem and ridge regression}

We now switch from the continuous problem~\eqref{eq:Genuine} to
learning from data. This alternative viewpoint highlights and justifies
the genuine estimator~\eqref{eq:Estimator} from an additional perspective.

Substituting the average for the expectation in~\eqref{eq:Genuine}
we consider the slightly more general objective 
\begin{equation}
\frac{1}{n}\sum_{i,j=1}^{n}\big(f_{i}-f(x_{i})\big)\Lambda_{ij}^{-1}\big(f_{j}-f(x_{j})\big)+\left\Vert f\right\Vert _{k}^{2},\label{eq:20}
\end{equation}
where $\Lambda$ is a symmetric and invertible regularization matrix.
We use lowercase letters $x_{i}\in\mathcal{X}$ and $f_{i}\in\mathbb{R}$
to emphasize that these quantities are deterministic.
\begin{prop}
The function $f\in\mathcal{H}_{k}$ minimizing~\eqref{eq:20} is
\begin{equation}
f(\cdot)=\frac{1}{n}\sum_{s=1}^{n}w_{i}\cdot k(\cdot,x_{i}),\label{eq:13-1}
\end{equation}
where the weights are
\begin{equation}
w=n\big(K^{\top}\Lambda^{-1}K+n\,K\big)^{-1}K^{\top}\Lambda^{-1}f.\label{eq:42-2}
\end{equation}
\end{prop}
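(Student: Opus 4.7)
The plan is to prefix a routine quadratic minimization with the classical representer argument, which reduces the infinite-dimensional problem \eqref{eq:20} to a finite-dimensional one. Let $V\coloneqq\spann\{k(\cdot,x_i):i=1,\dots,n\}\subset\mathcal{H}_k$ and orthogonally decompose any $f\in\mathcal{H}_k$ as $f=f_V+f_V^\perp$ with $f_V\in V$ and $f_V^\perp$ orthogonal to $V$ in $\langle\cdot\mid\cdot\rangle_k$. The reproducing identity \eqref{eq:Stetig} gives $f(x_i)=\bigl\langle k(\cdot,x_i)\bigm|f\bigr\rangle_k=f_V(x_i)$ for each~$i$, so the data-fit double sum in \eqref{eq:20} depends only on $f_V$. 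Pythagoras then yields $\|f\|_k^2=\|f_V\|_k^2+\|f_V^\perp\|_k^2\ge\|f_V\|_k^2$, forcing any minimizer to satisfy $f_V^\perp=0$ and so to take the form \eqref{eq:13-1}.

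I would next substitute this ansatz. Writing the Gram matrix $K\in\mathbb{R}^{n\times n}$ with $K_{ij}\coloneqq k(x_i,x_j)$, the reproducing property yields $f(x_i)=\frac{1}{n}(Kw)_i$, and the definition \eqref{eq:InnerProduct} gives $\|f\|_k^2=\frac{1}{n^2}w^\top K w$. Collecting the observations into the vector $f\coloneqq(f_1,\dots,f_n)^\top$, the objective \eqref{eq:20} reduces to the finite-dimensional quadratic
\[
J(w)=\frac{1}{n}\Bigl(f-\tfrac{1}{n}Kw\Bigr)^{\!\top}\Lambda^{-1}\Bigl(f-\tfrac{1}{n}Kw\Bigr)+\frac{1}{n^2}\,w^\top K w.
\]

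The first-order condition $\nabla_w J(w)=0$ reads
\[
-\frac{2}{n^2}K^\top\Lambda^{-1}f+\frac{2}{n^3}K^\top\Lambda^{-1}Kw+\frac{2}{n^2}Kw=0,
\]
which, after multiplication by $n^3/2$, becomes $(K^\top\Lambda^{-1}K+nK)\,w=n\,K^\top\Lambda^{-1}f$, i.e.\ precisely \eqref{eq:42-2}. Since the Hessian $\frac{2}{n^3}K^\top\Lambda^{-1}K+\frac{2}{n^2}K$ is positive semi-definite whenever $\Lambda^{-1}$ is, any such critical point is a global minimizer.

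The main obstacle is only a notational subtlety: the matrix $K^\top\Lambda^{-1}K+nK$ may be singular (e.g.\ if some $x_i$ coincide or $K$ is otherwise rank-deficient), in which case $w$ is determined only modulo $\ker K$. This ambiguity is harmless at the level of the estimator, since both $f(\cdot)=\frac{1}{n}\sum_i w_i\,k(\cdot,x_i)$ and $\|f\|_k^2=\frac{1}{n^2}w^\top K w$ are invariant under replacing $w$ by $w+v$ with $v\in\ker K$; a strictly positive definite kernel, or simply interpreting the inverse as a Moore--Penrose pseudo-inverse, removes the issue entirely.
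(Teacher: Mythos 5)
Your proof is correct and follows essentially the same route as the paper: reduce to the span of the $k(\cdot,x_i)$ and solve the resulting finite-dimensional quadratic via its first-order condition. The only difference is that you prove the representer step explicitly by orthogonal decomposition where the paper cites the representer theorem of Sch\"olkopf et al., and you add a (valid) remark on the possible singularity of $K^{\top}\Lambda^{-1}K+nK$.
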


\begin{proof}
Assuming that the optimal function is of the form~\eqref{eq:13-1},
the objective~\eqref{eq:20} is 
\[
\frac{1}{n}(f-\frac{1}{n}Kw)^{\top}\Lambda^{-1}\big(f-\frac{1}{n}Kw\big)+\frac{1}{n^{2}}w^{\top}Kw.
\]
Differentiating with respect to $w$ gives the first order conditions
\[
0=-\frac{1}{n^{2}}\left(K^{\top}\Lambda^{-1}\big(f-\frac{1}{n}Kw\big)\right)^{\top}-\frac{1}{n^{2}}\big(f-\frac{1}{n}Kw\big)^{\top}\Lambda^{-1}K+\frac{1}{n^{2}}(Kw)^{\top}+\frac{1}{n^{2}}w^{\top}K,
\]
i.e., 
\[
\frac{1}{n^{2}}\left(\frac{1}{n}K^{\top}\left(\Lambda^{-1}+\Lambda^{-\top}\right)K+K+K^{\top}\right)w=\frac{1}{n^{2}}K^{\top}\left(\Lambda^{-1}+\Lambda^{-\top}\right)f.
\]
The assertion follows, as $\Lambda^{-1}$ and $K$ are both symmetric.

It remains to demonstrate that the optimal function is indeed of the
form~\eqref{eq:13-1}, i.e., the optimal function $f\in\mathcal{H}_{k}$
is located exactly on the supporting points $x_{1},\dots,x_{n}$.
This, however, follows from the representer theorem, which \citet{Schoelkopf2001}
prove in the most general form.
\end{proof}
\begin{cor}
\label{cor:Estimator}The function $f\in\mathcal{H}_{k}$ minimizing
the objective 
\begin{equation}
\frac{1}{n}\sum_{i=1}^{n}\big(f_{i}-f(x_{i})\big)^{2}+\lambda\left\Vert f\right\Vert _{k}^{2}\label{eq:21}
\end{equation}
is $f(\cdot)\coloneqq\frac{1}{n}\sum_{j=1}^{n}w_{j}\,k(\cdot,x_{j})$
with weights $w=\bigl(\lambda+\frac{1}{n}K\bigr)^{-1}f$.
\end{cor}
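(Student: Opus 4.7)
The plan is to deduce this corollary directly from the preceding proposition by specializing the regularization matrix. Because $\lambda>0$, dividing~\eqref{eq:21} by $\lambda$ rewrites it as
\[
\frac{1}{n\lambda}\sum_{i=1}^{n}\bigl(f_{i}-f(x_{i})\bigr)^{2}+\|f\|_{k}^{2},
\]
which is precisely the objective~\eqref{eq:20} with the choice $\Lambda=\lambda I_{n}$. Since scaling by a positive constant preserves the minimizer, the preceding proposition immediately delivers both the representer form~\eqref{eq:13-1} and the weight formula~\eqref{eq:42-2}, and all that remains is to simplify~\eqref{eq:42-2} in this special case.

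The simplification is one line of matrix algebra. Substituting $\Lambda^{-1}=\tfrac{1}{\lambda}I_{n}$ into~\eqref{eq:42-2} and writing $K$ for the Gram matrix $\bigl(k(x_i,x_j)\bigr)_{i,j=1}^n$, I would use symmetry of $K$ to factor
\[
\tfrac{1}{\lambda}K^{2}+nK=\tfrac{1}{\lambda}K\bigl(K+n\lambda I_{n}\bigr)
\]
and then cancel the common factor $K$ between the resulting inverse and the trailing $K^\top\Lambda^{-1}=\tfrac{1}{\lambda}K$ on the right of~\eqref{eq:42-2}. This should collapse the expression to $w=n\bigl(K+n\lambda I_{n}\bigr)^{-1}f=\bigl(\lambda+\tfrac{1}{n}K\bigr)^{-1}f$, which is exactly the claimed formula.

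No genuine obstacle is anticipated; the only mild subtlety is notational, since in this discrete statement $K$ denotes the Gram matrix rather than the integral operator of~\eqref{eq:K}. The cancellation above uses invertibility of $K$, but the final matrix $\lambda I_{n}+\tfrac{1}{n}K$ is always invertible for $\lambda>0$ by positive semi-definiteness of $K$, so if $K$ itself is singular one can equivalently verify the first-order condition $K\bigl(\lambda I_{n}+\tfrac{1}{n}K\bigr)w=Kf$ by direct substitution of $w=\bigl(\lambda+\tfrac{1}{n}K\bigr)^{-1}f$ to bypass any degeneracy.
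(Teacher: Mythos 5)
Your proposal is correct and follows exactly the paper's route: the paper likewise sets $\Lambda=\lambda I$ in the preceding proposition (implicitly using that rescaling the objective by $1/\lambda$ preserves the minimizer) and leaves the simplification of~\eqref{eq:42-2} to the reader. Your explicit factorization and the remark on handling a singular Gram matrix merely fill in details the paper calls ``immediate.''
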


\begin{proof}
The assertion is immediate with $\Lambda=\lambda\cdot I$, the diagonal
matrix with entries~$\lambda$ on its diagonal.
\end{proof}

\section{\label{sec:StatisticalProperties}Elementary statistical properties}

As above, let $(X_{i},f_{i})$, $i=1,\dots,n$, be independent samples
from a joint measure~$\rho$. We note that $X_{i}\sim P$ and the
integral operator~$K$ in~\eqref{eq:K} can be restated as 
\[
Kw(x)=\E k(x,X_{i})\,w(X_{i})=\E\big(k(X_{i},X_{j})\,w(X_{j})\mid X_{i}=x\big);
\]
we shall make frequent use of the latter relation.
\begin{defn}
For $(X_{i},f_{i})$, $i=1,\dots,n$, independent samples from a joint
distribution~$\rho$ define the estimator
\begin{equation}
\hat{\vartheta}_{n}\coloneqq\min_{\hat{f}_{n}(\cdot)}\frac{1}{n}\sum_{i=1}^{n}\big(f_{i}-\hat{f}_{n}(X_{i})\big)^{2}+\lambda\left\Vert \hat{f}_{n}\right\Vert _{k}^{2}.\label{eq:Theta}
\end{equation}
\end{defn}

It is evident that $\hat{\vartheta}_{n}$ is an $\mathbb{R}$\nobreakdash-valued
random variable, dependent on the samples $(X_{i},f_{i})$. Further,
the optimizer 
\begin{equation}
\hat{f}_{n}(\cdot)\coloneqq\frac{1}{n}\sum_{i=1}^{n}k(\cdot,X_{i})\,\hat{w}_{i}\label{eq:SAA}
\end{equation}
of~\eqref{eq:Theta} (cf.\ Corollary~\ref{cor:Estimator}) is a
random function, as it is supported by the samples $X_{i}$, $i=1,\dots,n$,
and the weights 
\begin{equation}
\hat{w}=\Big(\lambda+\frac{1}{n}K\Big)^{-1}f\label{eq:wEstimator}
\end{equation}
depend on all~$(X_{i},f_{i})$, $i=1,\dots,n$. Relating to the term
sample average approximation (SAA) in stochastic optimization we shall
refer to the estimators~$\hat{\vartheta}_{n}$ and $\hat{f}_{n}(\cdot)$
as the \emph{SAA estimators}.
\begin{example}
A simple example is given by employing the trivial design measure
$P=\delta_{x_{0}}$, where $x_{0}\in\mathcal{X}$ is a fixed point
and $\delta_{x_{0}}(A)\coloneqq\begin{cases}
1 & \text{if }x_{0}\in A,\\
0 & \text{else}
\end{cases}$ is the Dirac\textendash measure. It is easily seen that the estimator~\eqref{eq:SAA}
is the function $\hat{f}_{n}(\cdot)=\frac{k(\cdot,x_{0})}{\lambda+k(x_{0},x_{0})}\cdot\frac{1}{n}\sum_{i=1}^{n}f_{i}$.
It is thus clear that the estimator~$f_{n}(\cdot)$ is biased and
all results necessarily depend on~$\lambda$. 
\end{example}

\begin{lem}
The estimator $\hat{\vartheta}_{n}$ and its optimizer $\hat{f}_{n}$
are bounded with probability $1$. More explicitly, for $\varepsilon>0$,
it holds that $0\le\hat{\vartheta}_{n}\le\|f\|_{2}^{2}+\varepsilon$
for $n$ large enough a.s.\ and the optimizer $\hat{f}_{\lambda,n}$
in~\eqref{eq:Theta} satisfies 
\begin{equation}
\|\hat{f}_{\lambda,n}\|_{k}\le\frac{\|f\|_{2}+\varepsilon}{\sqrt{\lambda}}\label{eq:Ball}
\end{equation}
 for~$n$ large enough almost surely.
\end{lem}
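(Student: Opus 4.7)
The plan is to exploit that $\hat{\vartheta}_n$ is defined as a minimum, then bound it above by evaluating the objective at a convenient feasible point and invoking the strong law of large numbers.

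First, nonnegativity of $\hat{\vartheta}_n$ is trivial because both summands in~\eqref{eq:Theta} are squares. For the upper bound, I would insert the candidate $\hat{f}_n \equiv 0 \in \mathcal{H}_k$ into the objective in~\eqref{eq:Theta}; since $\hat{\vartheta}_n$ is the infimum, this yields the deterministic-in-form inequality
\[
\hat{\vartheta}_n \;\le\; \frac{1}{n}\sum_{i=1}^{n} f_i^2.
\]
Since $(X_i,f_i)$ are i.i.d.\ from~$\rho$ and $f\in L^2(\rho)$ (so $\E f^2 = \|f\|_2^2 < \infty$), the strong law of large numbers gives $\frac{1}{n}\sum_{i=1}^n f_i^2 \to \|f\|_2^2$ almost surely. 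Fixing any $\varepsilon>0$, for $n$ large enough (depending on the realization) we therefore have $\hat{\vartheta}_n \le \|f\|_2^2+\varepsilon$, which establishes the first claim.

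For the bound on the optimizer, I would use the elementary observation that the regularization term alone is dominated by the full objective evaluated at the minimizer:
\[
\lambda\,\|\hat{f}_{\lambda,n}\|_k^2 \;\le\; \frac{1}{n}\sum_{i=1}^{n}\bigl(f_i-\hat{f}_{\lambda,n}(X_i)\bigr)^2 + \lambda\,\|\hat{f}_{\lambda,n}\|_k^2 \;=\; \hat{\vartheta}_n.
\]
Combining with the previous step, for $n$ large enough almost surely,
\[
\|\hat{f}_{\lambda,n}\|_k \;\le\; \sqrt{\frac{\|f\|_2^2+\varepsilon'}{\lambda}},
\]
and one then uses subadditivity of the square root, $\sqrt{a+b}\le\sqrt a+\sqrt b$, to rewrite the right-hand side as $\frac{\|f\|_2+\sqrt{\varepsilon'}}{\sqrt{\lambda}}$. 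Since the $\varepsilon$ in the statement is arbitrary, choosing $\varepsilon'$ with $\sqrt{\varepsilon'}\le\varepsilon$ delivers~\eqref{eq:Ball}.

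There is no real obstacle here; the only point requiring minor care is the bookkeeping between the two $\varepsilon$'s, which is handled by picking $\varepsilon'=\varepsilon^2$ (or simply renaming). The proof does not use any structural property of the kernel beyond the fact that $\mathcal{H}_k$ contains the zero function, so the argument applies verbatim under the standing hypotheses of the paper.
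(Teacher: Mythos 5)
Your proposal is correct and follows essentially the same route as the paper: evaluate the objective at the zero function, apply the strong law of large numbers, and then bound $\lambda\|\hat{f}_{\lambda,n}\|_{k}^{2}$ by the full objective at the minimizer. Your explicit handling of the $\varepsilon$-bookkeeping via $\sqrt{a+b}\le\sqrt{a}+\sqrt{b}$ is in fact slightly more careful than the paper's terse ``hence the assertion.''
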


\begin{proof}
Choose $\hat{f}_{\lambda}(\cdot)=0$ in~\eqref{eq:Theta} to see
that $0\le\hat{\vartheta}_{n}\le\frac{1}{n}\sum_{i=1}^{n}f_{i}^{2}$.
By the strong law of large numbers there is $N(\omega,\varepsilon)$
so that $0\le\hat{\vartheta}_{n}\le\frac{1}{n}\sum_{i=1}^{n}f_{i}^{2}\le\|f\|_{2}^{2}+\varepsilon$
for every $n\ge N(\omega,\varepsilon)$. Further, $\lambda\|\hat{g}_{n}\|_{k}^{2}\le\|f\|_{2}^{2}+\varepsilon$
a.s.\ for every reasonable and feasible estimator $\tilde{g}_{n}$
in~\eqref{eq:Theta} and hence the assertion.
\end{proof}
The following consistency result is originally demonstrated in \citet[Lemma~4.1]{NorkinPflug}
in a different context. 
\begin{thm}[{Cf.\ \citet[Lemma~4.1]{NorkinPflug} and \citet[Proposition~5.6]{RuszczynskiShapiro2009}}]
\label{thm:Consistency}The estimator~$\hat{\vartheta}_{n}$ is
downwards biased and monotone in expectation for increasing sample
sizes; more precisely, it holds that 
\[
0\le\E\hat{\vartheta}_{n}\le\E\hat{\vartheta}_{n+1}\le\vartheta^{*},
\]
where $\vartheta^{*}=\E\big(f-f_{\lambda}(X)\big)^{2}+\lambda\left\Vert f_{\lambda}\right\Vert _{k}^{2}$
with $f_{\lambda}(\cdot)$ given in~\eqref{eq:64} is the objective
of the contiuous  problem~\eqref{eq:Genuine} (see also~\eqref{eq:Theta*}).
\end{thm}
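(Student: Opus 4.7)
My plan is to treat the three inequalities in order, since the first is trivial and the last is the sample-average-approximation (SAA) version of a standard weak duality bound, while the real content lies in the middle monotonicity claim.

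The lower bound $\hat{\vartheta}_{n}\ge0$ is immediate from the definition~\eqref{eq:Theta} as a sum of a nonnegative squared-error term and a nonnegative norm penalty. For the upper bound $\E\hat{\vartheta}_{n}\le\vartheta^{*}$, I would exploit the minimality of $\hat{\vartheta}_{n}$: plugging the deterministic continuous optimizer $f_{\lambda}$ from Theorem~\ref{thm:Optimal} into the SAA objective yields
\[
\hat{\vartheta}_{n}\le\frac{1}{n}\sum_{i=1}^{n}\bigl(f_{i}-f_{\lambda}(X_{i})\bigr)^{2}+\lambda\|f_{\lambda}\|_{k}^{2}.
\]
Because $(X_{i},f_{i})$ are i.i.d.\ with law $\rho$ and $f_{\lambda}$ is nonrandom, taking expectations on both sides gives $\E\hat{\vartheta}_{n}\le\E\bigl(f-f_{\lambda}(X)\bigr)^{2}+\lambda\|f_{\lambda}\|_{k}^{2}=\vartheta^{*}$.

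For the monotonicity $\E\hat{\vartheta}_{n}\le\E\hat{\vartheta}_{n+1}$ I would use the classical leave-one-out decomposition (as in Norkin--Pflug and Mak--Morton--Wood). Let $\hat{f}_{n+1}$ denote an optimizer of the $(n+1)$\nobreakdash-sample problem, and for each $j\in\{1,\dots,n+1\}$ define
\[
\hat{\vartheta}_{n}^{(j)}\coloneqq\min_{g\in\mathcal{H}_{k}}\frac{1}{n}\sum_{i\ne j}\bigl(f_{i}-g(X_{i})\bigr)^{2}+\lambda\|g\|_{k}^{2}.
\]
The key combinatorial identity is $\frac{1}{n+1}\sum_{i=1}^{n+1}a_{i}=\frac{1}{n+1}\sum_{j=1}^{n+1}\frac{1}{n}\sum_{i\ne j}a_{i}$, which applied to the squared residuals $a_{i}=\bigl(f_{i}-\hat{f}_{n+1}(X_{i})\bigr)^{2}$ yields
\[
\hat{\vartheta}_{n+1}=\frac{1}{n+1}\sum_{j=1}^{n+1}\left[\frac{1}{n}\sum_{i\ne j}\bigl(f_{i}-\hat{f}_{n+1}(X_{i})\bigr)^{2}+\lambda\|\hat{f}_{n+1}\|_{k}^{2}\right]\ge\frac{1}{n+1}\sum_{j=1}^{n+1}\hat{\vartheta}_{n}^{(j)},
\]
where the inequality uses that $\hat{f}_{n+1}$ is feasible but not necessarily optimal for each leave-one-out problem. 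Taking expectations and invoking the i.i.d.\ property to get $\E\hat{\vartheta}_{n}^{(j)}=\E\hat{\vartheta}_{n}$ for every $j$ finishes the argument.

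The only mildly delicate point I anticipate is making sure the leave-one-out step is written with the correct normalizations (the $\frac{1}{n}$ versus $\frac{1}{n+1}$ factors on the data-fit term, with the kernel-norm penalty appearing \emph{identically} in each of the $n+1$ subproblems); the combinatorial identity matches exactly, so no further technicalities arise. Measurability of $\hat{\vartheta}_{n}$ and integrability follow from the explicit representation of the weights in~\eqref{eq:wEstimator} together with the a.s.\ bound in~\eqref{eq:Ball}, so the expectations in the chain are well defined.
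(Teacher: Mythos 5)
Your proposal is correct and follows essentially the same route as the paper: the identical leave-one-out decomposition via the combinatorial identity $\frac{1}{n+1}\sum_{i}a_{i}=\frac{1}{n+1}\sum_{j}\frac{1}{n}\sum_{i\ne j}a_{i}$ for monotonicity, and the exchange of minimization and expectation (you evaluate at the particular feasible point $f_{\lambda}$, the paper takes the infimum after the expectation, which amounts to the same bound) for $\E\hat{\vartheta}_{n}\le\vartheta^{*}$. No gaps.
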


\begin{proof}
It holds that
\begin{align*}
\E\hat{\vartheta}_{n+1} & =\E\min_{\hat{f}_{n+1}(\cdot)}\frac{1}{n+1}\sum_{i=1}^{n+1}\big(f_{i}-\hat{f}_{n+1}(X_{i})\big)^{2}+\lambda\left\Vert \hat{f}_{n+1}\right\Vert _{k}^{2}\\
 & =\E\min_{\hat{f}_{n+1}(\cdot)}\frac{1}{n+1}\sum_{i=1}^{n+1}\frac{1}{n}\sum_{j\not=i}\big(f_{j}-\hat{f}_{n+1}(X_{j})\big)^{2}+\lambda\left\Vert \hat{f}_{n+1}\right\Vert _{k}^{2}\\
 & \ge\E\frac{1}{n+1}\sum_{i=1}^{n+1}\min_{\hat{f}_{i}(\cdot)}\frac{1}{n}\sum_{j\not=i}\big(f_{j}-\hat{f}_{i}(X_{j})\big)^{2}+\lambda\left\Vert \hat{f}_{i}\right\Vert _{k}^{2}\\
 & =\frac{1}{n+1}\sum_{i=1}^{n+1}\E\hat{\vartheta}_{n}=\E\hat{\vartheta}_{n}.
\end{align*}
Further, the optimal value of~\eqref{eq:Genuine} is given by~$f_{\lambda}$
(cf.~\eqref{eq:64} in Theorem~\ref{thm:Optimal}). 

Finally we have that 
\[
\min_{\hat{f}_{n}(\cdot)}\frac{1}{n}\sum_{i=1}^{n}\big(f_{i}-\hat{f}_{n}(X_{i})\big)^{2}+\lambda\left\Vert \hat{f}_{n}\right\Vert \le\frac{1}{n}\sum_{i=1}^{n}\big(f_{i}-\hat{f}_{n}(X_{i})\big)^{2}+\lambda\left\Vert \hat{f}_{n}\right\Vert .
\]
By taking expectations and the infimum afterwards we conclude that
$\E\hat{\vartheta}_{n}\le\vartheta^{*}$, the remaining inequality.
\end{proof}

\section{\label{sec:InNorm}Approximation in norm}

Recall that the optimal solution of the continuous problem~\eqref{eq:Genuine}
is the function  $f_{\lambda}(\cdot)\in\mathcal{H}_{k}$, while the
optimal solution of the discrete analogue~\eqref{eq:Theta} is the
random variable~\eqref{eq:SAA}. In what follows we shall establish
convergence of $\hat{f}_{n}(\cdot)$ towards $f_{\lambda}(\cdot)$
for increasing sample size~$n$.

To establish convergence in norm we relate the problems first to the
following auxiliary problem involving an auxiliary estimator~$\tilde{f}$.
Its residual constitutes an important relation between $f_{\lambda}$
and $\hat{f}_{n}$, but is unbiased itself. The auxiliary estimator~$\tilde{f}$
removes the bias and allows denoising the genuine problem. The Subsection~\ref{subsec:Relation}
below will reconnect the estimators~$\tilde{f}$ and $\hat{f}$. 

\subsection{Denoising and local bias adjustment}

The following estimator $\tilde{f}_{n}$ turns out to capture and
remove the noise in problem~\eqref{eq:Theta}.
\begin{defn}
Define the function
\begin{align*}
\tilde{f}_{n}(\cdot) & \coloneqq\frac{1}{n}\sum_{j=1}^{n}\tilde{w}_{j}\,k(\cdot,X_{j}),\quad\text{where }\tilde{w}_{j}\coloneqq\frac{f_{j}-f_{\lambda}(X_{j})}{\lambda},
\end{align*}
the residual function 
\begin{align}
\tilde{r}_{n}(\cdot) & \coloneqq f_{\lambda}(\cdot)-\tilde{f}_{n}(\cdot)=f_{\lambda}(\cdot)-\frac{1}{n}\sum_{j=1}^{n}k(\cdot,X_{j})\,\tilde{w}_{j}\label{eq:tilder}
\end{align}
and the vector of residuals with entries $\tilde{r}_{i}\coloneqq f_{i}-\lambda\,\tilde{w}_{i}-\frac{1}{n}\sum_{j=1}^{n}k(X_{i},X_{j})\,\tilde{w}_{j}$,
$i=1,\dots,n$.
\end{defn}

\begin{rem}
The weights~$\hat{w}_{i}$ and the function values~$f_{i}$ are
connected via the linear system of equations~\eqref{eq:Bandwidth}.
The visualization in Figure~\ref{fig:Correlation} indicates that
$f_{i}$ and the weights $\lambda\,\hat{w}_{i}$ are strongly correlated
with a gap approximately $f_{\lambda}(X_{i})$. The definition of
the auxiliary estimator~$\tilde{f}_{n}(\cdot)$ in the preceding
definition anticipates and explores this observation.

\begin{figure}
\subfloat[\label{fig:Gaussian}Gaussian field regression~$\hat{f}_{n}(\cdot)$
with range $\pm\sqrt{\hat{K}(\cdot)}$ given by~\eqref{eq:Bandwidth}]{\includegraphics[viewport=20bp 30bp 628bp 340bp,clip,width=0.51\textwidth]{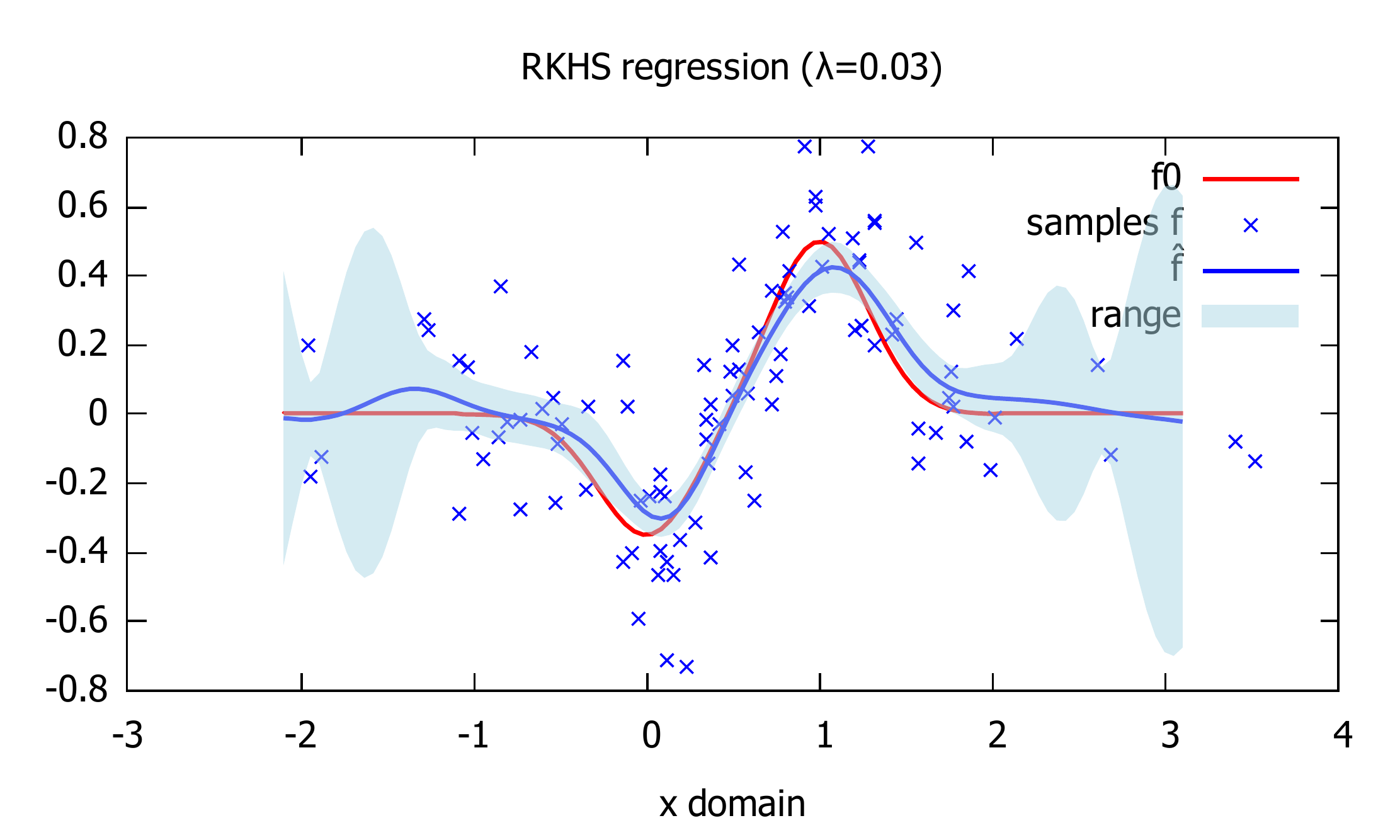}

}\subfloat[\label{fig:Correlation}The random weights $\lambda\,\hat{w}_{i}$
and the adapted noise $f_{i}-f_{\lambda}(X_{i})$ are almost comonotonic.]{\includegraphics[viewport=25bp 30bp 623bp 340bp,clip,width=0.49\textwidth]{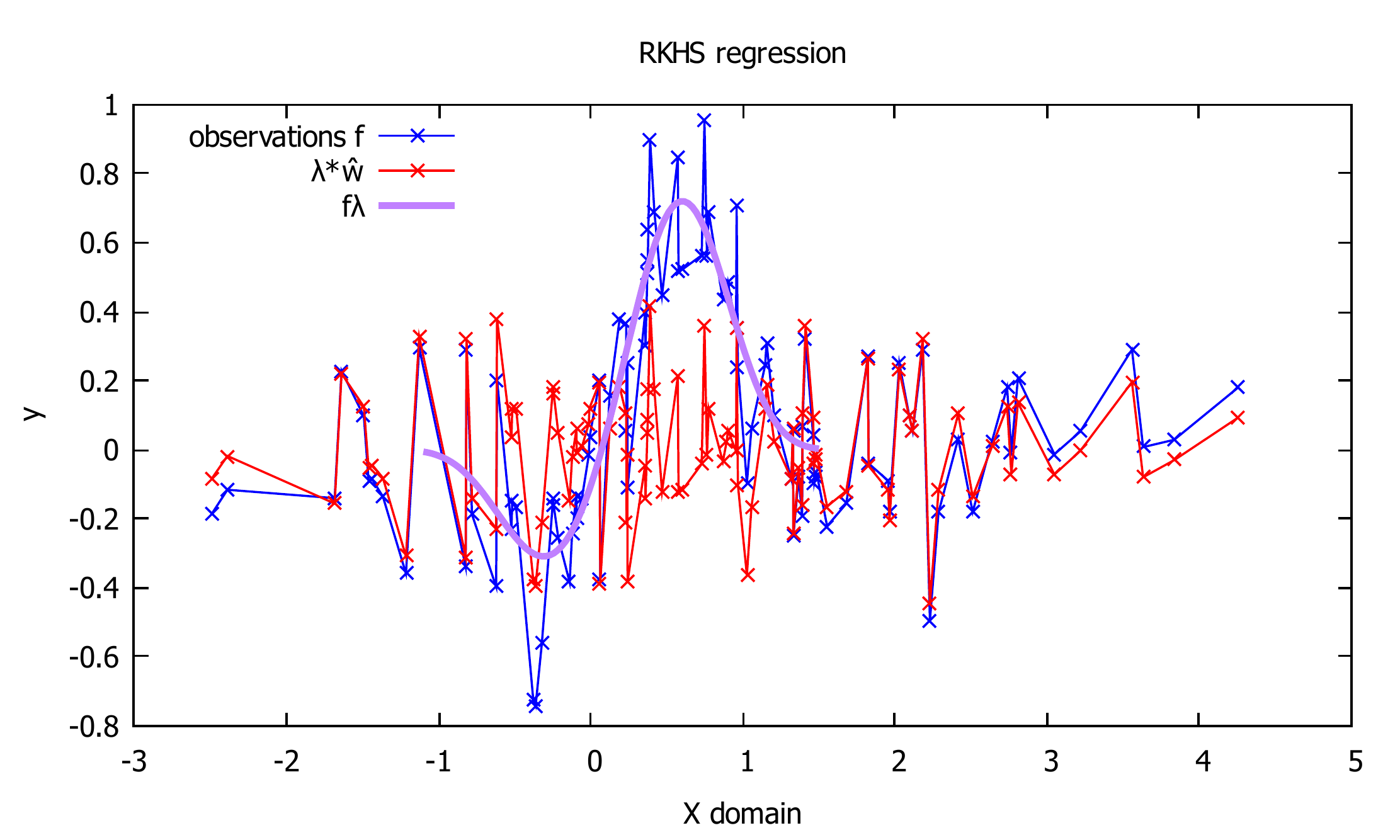}

}\caption{Approximation of functions for a sample of size $n=100$}
\end{figure}
\end{rem}

\begin{lem}
\label{lem:Residuals}The residuals are $\tilde{r}_{i}=\tilde{r}_{n}(X_{i})$.
\end{lem}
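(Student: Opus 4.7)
The plan is to verify the identity by direct substitution, since both quantities are given explicitly in the definition just preceding the lemma. The whole statement reduces to bookkeeping on the definition of the weights $\tilde{w}_j$.

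First, I would substitute $x=X_i$ into the expression~\eqref{eq:tilder} for $\tilde{r}_n(\cdot)$, which yields
\[
\tilde{r}_n(X_i)=f_\lambda(X_i)-\frac{1}{n}\sum_{j=1}^{n}k(X_i,X_j)\,\tilde{w}_j.
\]
Comparing this with the definition of the residual vector entry
\[
\tilde{r}_i=f_i-\lambda\,\tilde{w}_i-\frac{1}{n}\sum_{j=1}^{n}k(X_i,X_j)\,\tilde{w}_j,
\]
it becomes clear that the claim reduces to showing $f_\lambda(X_i)=f_i-\lambda\,\tilde{w}_i$.

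This last identity is simply a rearrangement of the defining formula $\tilde{w}_i=\bigl(f_i-f_\lambda(X_i)\bigr)/\lambda$, which gives $\lambda\,\tilde{w}_i=f_i-f_\lambda(X_i)$, equivalently $f_i-\lambda\,\tilde{w}_i=f_\lambda(X_i)$, as required. Substituting back completes the proof.

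The only subtlety (hardly an obstacle) is making sure the two representations use consistent indexing conventions for the sample $X_1,\dots,X_n$ and the weights $\tilde{w}_1,\dots,\tilde{w}_n$; once this is noted, the argument is purely an unwinding of definitions. The value of the lemma for what follows is not the proof itself but the interpretation: it expresses the pointwise residuals $\tilde{r}_i$ at the design points as the restriction of the global residual function $\tilde{r}_n(\cdot)$, which is what will later permit the transfer of estimates between the discrete sample and the continuous reproducing kernel Hilbert space norm.
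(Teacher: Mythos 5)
Your proposal is correct and follows essentially the same route as the paper: both arguments substitute the definition $\tilde{w}_i=\bigl(f_i-f_\lambda(X_i)\bigr)/\lambda$ into the formula for $\tilde{r}_i$ so that $f_i-\lambda\,\tilde{w}_i$ collapses to $f_\lambda(X_i)$, leaving exactly $\tilde{r}_n(X_i)$. The paper simply performs this cancellation in a single displayed chain of equalities rather than isolating the identity $f_\lambda(X_i)=f_i-\lambda\,\tilde{w}_i$ as an intermediate step.
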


\begin{proof}
Indeed, 
\[
\tilde{r}_{i}=f_{i}-\lambda\,\frac{f_{i}-f_{\lambda}(X_{i})}{\lambda}-\frac{1}{n}\sum_{j=1}^{n}k(X_{i},X_{j})\,\tilde{w}_{j}=f_{\lambda}(X_{i})-\frac{1}{n}\sum_{j=1}^{n}k(X_{i},X_{j})\,\tilde{w}_{j}=\tilde{r}_{n}(X_{i}),
\]
the assertion.
\end{proof}
We shall establish the relation between $\tilde{f}_{n}(\cdot)$ and
$f_{\lambda}(\cdot)$ first. To this end recall that $\tilde{f}_{n}(\cdot)$
is random, while~$f_{\lambda}(\cdot)$ is deterministic. The function~$\tilde{f}_{n}(\cdot)$
recovers the function $f_{\lambda}(\cdot)$ \emph{on average} and
enjoys the following statistical properties.
\begin{prop}[$\tilde{f}_{n}$ is $f_{\lambda}$ on average]
\label{prop:Residual}It holds that 
\[
f_{\lambda}(x)=\E\tilde{f}_{n}(x),\qquad x\in\mathcal{X}.
\]
Equivalently, the residual $\tilde{r}(\cdot)$ is locally unbiased,
i.e., 
\[
\E\tilde{r}_{n}(x)=0
\]
 for every $x\in\mathcal{X}$.
\end{prop}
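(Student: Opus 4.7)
The plan is to compute $\E\tilde f_n(x)$ directly, collapsing the sum via i.i.d.\ symmetry, then reducing the resulting one-dimensional expectation to the operator $K$ applied to $f_0-f_\lambda$, where Corollary~\ref{cor:33} will finish the job.

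First, since the samples $(X_j,f_j)$ are i.i.d.\ with law $\rho$, I would apply linearity of expectation to get
\[
\E\tilde f_n(x)=\frac{1}{n}\sum_{j=1}^n\E\!\left[k(x,X_j)\,\frac{f_j-f_\lambda(X_j)}{\lambda}\right]=\frac{1}{\lambda}\,\E\bigl[k(x,X)\,(f-f_\lambda(X))\bigr],
\]
where $(X,f)\sim\rho$ is a single representative draw. The point is that the index $j$ has disappeared, and the whole statement is now about the law $\rho$, not about the sample size.

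Next I would condition on $X$ and use the defining property~\eqref{eq:Ef} of $f_0$, namely $\E(f\mid X)=f_0(X)$. The tower property yields
\[
\E\bigl[k(x,X)\,f\bigr]=\E\bigl[k(x,X)\,f_0(X)\bigr]=Kf_0(x),
\]
and trivially $\E[k(x,X)\,f_\lambda(X)]=Kf_\lambda(x)$ since $f_\lambda$ is deterministic. Therefore
\[
\E\tilde f_n(x)=\frac{1}{\lambda}\bigl(Kf_0(x)-Kf_\lambda(x)\bigr)=\frac{1}{\lambda}\,K(f_0-f_\lambda)(x).
\]

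The finishing touch is to invoke Corollary~\ref{cor:33}: equation~\eqref{eq:36} gives $f_0-f_\lambda=\lambda\,w_\lambda$, while~\eqref{eq:64} gives $f_\lambda=Kw_\lambda$. Substituting,
\[
\E\tilde f_n(x)=\frac{1}{\lambda}\,K(\lambda\,w_\lambda)(x)=Kw_\lambda(x)=f_\lambda(x),
\]
which is exactly the claim. The equivalent form $\E\tilde r_n(x)=0$ is then immediate from the definition~\eqref{eq:tilder} of $\tilde r_n$. There is no real obstacle here; the only conceptual point is recognizing that the $\lambda$ in the denominator of $\tilde w_j$ is precisely cancelled by the factor $\lambda$ in the identity $f_0-f_\lambda=\lambda w_\lambda$, which is exactly why the definition of $\tilde w_j$ was chosen with that normalization.
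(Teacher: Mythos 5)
Your proposal is correct and follows essentially the same route as the paper: both arguments rest on the tower property together with the identities $f_{0}-f_{\lambda}=\lambda\,w_{\lambda}$ and $f_{\lambda}=Kw_{\lambda}$ from Corollary~\ref{cor:33}; the paper merely applies them in the opposite order, first identifying $\E(\tilde{w}_{j}\mid X_{j}=x)=w_{\lambda}(x)$ and then applying $K$, whereas you first pull out $\tfrac{1}{\lambda}K(f_{0}-f_{\lambda})$ and cancel the $\lambda$ at the end. No gap.
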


\begin{proof}
Observe first with~\eqref{eq:Ef} and~\eqref{eq:36} that 
\begin{equation}
\E\left(\left.\tilde{w}_{j}\right|X_{j}=x\right)=\E\left(\left.\frac{f_{j}-f_{\lambda}(X_{j})}{\lambda}\right|X_{j}=x\right)=\frac{f_{0}(x)-f_{\lambda}(x)}{\lambda}=w_{\lambda}(x)\label{eq:Weights}
\end{equation}
as both, $f_{j}$ and $f_{\lambda}(X_{j})$ are in $L^{2}$. By the
tower property of the expectation, by taking out what is known and~\eqref{eq:34},
\begin{equation}
\E k(x,X_{j})\,\tilde{w}_{j}=\E\left(k(x,X_{j})\E(\tilde{w}_{j}|\,X_{j})\right)=\E k(x,X_{j})\,w_{\lambda}(X_{j})=f_{\lambda}(x)\label{eq:46}
\end{equation}
for every $j=1,\dots,n$. With this, the assertion is immediate and
the expected value of the residual follows together with its definition
in~\eqref{eq:tilder}.
\end{proof}
The preceding relation reveals the expectation of $\tilde{f}_{n}$
locally. The next proposition demonstrates local convergence for increasing
sample size~$n$.
\begin{prop}[Local approximation quality]
For every $x\in\mathcal{X}$ there is a constant $C(x)>0$ so that
\[
\var\tilde{f}_{n}(x)=\frac{C(x)}{n}.
\]
\end{prop}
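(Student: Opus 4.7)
The proof reduces to the elementary fact that the variance of a sample mean of i.i.d.\ random variables scales as $1/n$. The plan is to recognize $\tilde{f}_n(x)$ as exactly such a sample mean, then verify finiteness of the single-term second moment.

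First I would fix $x \in \mathcal{X}$ and introduce the random variables
\[
Y_j \coloneqq \tilde{w}_j\,k(x,X_j) = \frac{f_j - f_\lambda(X_j)}{\lambda}\,k(x,X_j), \qquad j=1,\dots,n,
\]
so that $\tilde{f}_n(x) = \frac{1}{n}\sum_{j=1}^n Y_j$. Since $(X_j,f_j)$ are i.i.d.\ with law $\rho$, the $Y_j$ are i.i.d.\ copies of a single random variable $Y_1$. The elementary identity for sample means then yields
\[
\var \tilde{f}_n(x) = \frac{1}{n}\,\var Y_1,
\]
and one simply sets $C(x) \coloneqq \var Y_1$.

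The only thing that really requires checking is that $C(x) < \infty$. Here I would combine three ingredients already established: (i) the standing assumption that $k$ is uniformly bounded, so $|k(x,X_j)| \le \sup_{z}\sqrt{k(z,z)}\cdot\sqrt{k(x,x)}$ by the Cauchy--Schwarz inequality applied to the reproducing property; (ii) $f \in L^2(\rho)$ by hypothesis in the continuous problem~\eqref{eq:Genuine}; and (iii) $f_\lambda \in \mathcal{H}_k$ by Theorem~\ref{thm:Optimal}, which is bounded thanks to the point-evaluation bound~\eqref{eq:Point}. Together these give $\E Y_1^2 \le \tfrac{1}{\lambda^2}\,k(x,x)\sup_z k(z,z)\,\E(f_1 - f_\lambda(X_1))^2 < \infty$, hence $C(x) < \infty$.

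I do not anticipate a genuine obstacle: the whole argument is a one-line application of independence plus a routine $L^2$-bound. If one wished to be fully precise, the claimed strict positivity $C(x) > 0$ should be read as the non-degenerate case (otherwise $Y_1$ would be a.s.\ constant, which fails whenever the conditional variance $\var(f \mid X)$ is not a.s.\ zero); the proof of the $1/n$ scaling itself is indifferent to this point.
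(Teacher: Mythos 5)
Your proposal is correct and is essentially the paper's own argument: the paper expands $\E\big(f_{\lambda}(x)-\frac{1}{n}\sum_{j}\tilde{w}_{j}k(x,X_{j})\big)^{2}$ by hand, using $\E\,\tilde{w}_{j}k(x,X_{j})=f_{\lambda}(x)$ and independence of the cross terms, which is precisely the inlined proof of the i.i.d.\ sample-mean identity $\var\tilde{f}_{n}(x)=\frac{1}{n}\var Y_{1}$ you invoke, arriving at the same constant $C(x)=\frac{1}{\lambda^{2}}\E\big(f-f_{\lambda}(X)\big)^{2}k(x,X)^{2}-f_{\lambda}(x)^{2}$. Your explicit finiteness check and the remark on the degenerate case $C(x)=0$ are welcome additions that the paper glosses over.
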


\begin{proof}
Employing Proposition~\ref{prop:Residual} we have that 
\begin{align*}
\var\tilde{f}_{n}(x) & =\E\Big(f_{\lambda}(x)-\frac{1}{n}\sum_{j=1}^{n}\tilde{w_{j}}\,k(x,X_{j})\Big)^{2}\\
 & =f_{\lambda}(x)^{2}-2f_{\lambda}(x)^{2}+\frac{1}{n^{2}}\sum_{i,j=1}^{n}\E\tilde{w}_{i}\,\tilde{w}_{j}\,k(x,X_{i})\,k(x,X_{j})\\
 & =f_{\lambda}(x)^{2}-2f_{\lambda}(x)^{2}+\frac{n^{2}-n}{n^{2}}f_{\lambda}(x)^{2}+\frac{n}{n^{2}}\sum_{i=1}^{n}\E\Big(\frac{f_{i}-f_{\lambda}(X_{i})}{\lambda}\Big)^{2}k(x,X_{i})^{2}
\end{align*}
as $X_{i}$ and $X_{j}$ are independent for $i\not=j$. It follows
that $\var\tilde{f}_{n}(x)=-\frac{1}{n}f_{\lambda}(x)^{2}+\frac{1}{n\,\lambda^{2}}C^{\prime}(x)$,
where $C^{\prime}(x)=\E\big(f-f_{\lambda}(X)\big)^{2}\,k(x,X)^{2}$
is finite.
\end{proof}

\subsection{Uniform approximation properties of the auxiliary estimator $\tilde{f}_{n}$}

The following theorem reveals the \emph{precise} approximation quality
of the estimator with weights~$\tilde{w}$.
\begin{thm}[Approximation in norm]
\label{thm:Norm1}It holds that 
\[
\E\|f_{\lambda}-\tilde{f}_{n}\|_{k}^{2}=\frac{C_{1}}{\lambda^{2}\,n},
\]
where $C_{1}>0$ is a constant independent on~$\lambda$ and~$n$.
More explicitly, 
\[
\E\left\Vert \tilde{r}_{n}\right\Vert _{k}^{2}=\frac{1}{\lambda^{2}\,n}\int_{\mathcal{X}}\Big(\var(f|\,x)+\big(f_{0}(x)-f_{\lambda}(x)\big)^{2}\Big)k(x,x)P(dx)-\frac{1}{n}\left\Vert f_{\lambda}\right\Vert _{k}^{2},
\]
where $\var\big(f|\,x\big)=\E\Big(\big(f-f_{0}(X)\big)^{2}\big|\,X=x\Big)$
is the variance of the random data at~$x$ (see~\eqref{eq:Ef}),
the local irreducible error.
\end{thm}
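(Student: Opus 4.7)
The plan is to expand $\|\tilde r_n\|_k^2$ by bilinearity of the RKHS inner product, convert every inner product against $k(\cdot,X_j)$ into a point evaluation via the reproducing property, and then take expectations by conditioning on each $X_j$ and exploiting independence of the samples. All three statistical facts I need are already in hand: equation~\eqref{eq:Weights} gives $\E(\tilde w_j\mid X_j)=w_\lambda(X_j)$, identity~\eqref{eq:46} gives $\E\,k(x,X_j)\tilde w_j=f_\lambda(x)$, and Proposition~\ref{prop:23} relates the $L^2$ pairing with $w_\lambda$ to the $\mathcal H_k$ norm of $f_\lambda=Kw_\lambda$.

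Concretely, starting from
\begin{equation*}
\|\tilde r_n\|_k^2 = \|f_\lambda\|_k^2 - 2\,\langle f_\lambda,\tilde f_n\rangle_k + \|\tilde f_n\|_k^2,
\end{equation*}
I would treat the three pieces separately. For the cross term, the reproducing property yields $\langle f_\lambda,k(\cdot,X_j)\rangle_k=f_\lambda(X_j)$, so conditioning each summand on $X_j$ and using~\eqref{eq:Weights} together with Proposition~\ref{prop:23} gives $\E\langle f_\lambda,\tilde f_n\rangle_k=\E\,w_\lambda(X_j)f_\lambda(X_j)=\langle w_\lambda,Kw_\lambda\rangle=\|f_\lambda\|_k^2$.

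For $\|\tilde f_n\|_k^2=\frac{1}{n^2}\sum_{i,j=1}^n\tilde w_i\tilde w_j k(X_i,X_j)$ I would split the double sum into its $n(n-1)$ off-diagonal and $n$ diagonal contributions. For $i\neq j$ the pairs $(X_i,f_i)$ and $(X_j,f_j)$ are independent; conditioning on $X_j$ and integrating $(X_i,f_i)$ out reproduces exactly~\eqref{eq:46}, so $\E[\tilde w_i k(X_i,X_j)\mid X_j]=Kw_\lambda(X_j)=f_\lambda(X_j)$, and a second application of~\eqref{eq:Weights} and Proposition~\ref{prop:23} contributes $\|f_\lambda\|_k^2$ per pair, totalling $\tfrac{n-1}{n}\|f_\lambda\|_k^2$. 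For $i=j$, conditioning on $X_i$ produces
\begin{equation*}
\E\,\tilde w_i^2\,k(X_i,X_i)=\frac{1}{\lambda^2}\int_{\mathcal X}\E\bigl((f-f_\lambda(x))^2\bigm|X=x\bigr)k(x,x)\,P(dx),
\end{equation*}
and the decomposition $f-f_\lambda(x)=(f-f_0(x))+(f_0(x)-f_\lambda(x))$ combined with the orthogonality relation~\eqref{eq:Orthogonal} kills the cross term, leaving the integrand $\var(f\mid x)+(f_0(x)-f_\lambda(x))^2$.

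Assembling the three pieces yields
\begin{equation*}
\E\|\tilde r_n\|_k^2=\|f_\lambda\|_k^2-2\|f_\lambda\|_k^2+\frac{n-1}{n}\|f_\lambda\|_k^2+\frac{1}{n\lambda^2}\int_{\mathcal X}\bigl(\var(f\mid x)+(f_0(x)-f_\lambda(x))^2\bigr)k(x,x)\,P(dx),
\end{equation*}
which collapses immediately to the stated identity, and the bound $\E\|f_\lambda-\tilde f_n\|_k^2\le C_1/(\lambda^2 n)$ follows by taking $C_1$ to dominate the (finite) integral uniformly in the small parameter $\lambda$. The only real delicacy is careful bookkeeping of the conditioning: one must use independence precisely on the off-diagonal $i\neq j$ and the full second moment on the diagonal $i=j$, and apply the tower property in an order that exposes $\E(\tilde w_j\mid X_j)$ cleanly; the remaining algebra is routine.
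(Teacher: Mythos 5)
Your proof is correct and follows essentially the same route as the paper's: the same three-term expansion of $\|\tilde r_n\|_k^2$, the same identification of the cross term with $\|f_\lambda\|_k^2$ via \eqref{eq:Weights} and Proposition~\ref{prop:23}, and the same diagonal/off-diagonal split of the double sum, with the orthogonality relation \eqref{eq:Orthogonal} producing the integrand $\var(f\mid x)+\big(f_0(x)-f_\lambda(x)\big)^2$ on the diagonal. The one place you diverge is the off-diagonal term: the paper decomposes $\tilde w_i$ into a noise part plus $w_\lambda(X_i)$ and invokes the Mercer expansion (the ``kernel trick'' of Remark~\ref{rem:Mercer}) to factor the expectation and annihilate the noise contribution, whereas you condition directly on $(X_j,f_j)$ and integrate $(X_i,f_i)$ out using \eqref{eq:46}; your route is more elementary and avoids Mercer entirely, at the (small) cost of the bookkeeping you yourself flag \textemdash{} the conditioning must be on $(X_j,f_j)$, not on $X_j$ alone, before $\tilde w_j$ can be pulled out of the inner expectation. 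Your closing remark that $C_1$ should be taken to dominate the integral uniformly in small $\lambda$ is, if anything, more careful than the theorem's phrasing, since the explicit right-hand side depends on $\lambda$ through $f_\lambda$.
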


\begin{rem}
The conditional variance term~$\var(f|\,x)$ points to the fact that
convergence actually differs for homoscedastic and heteroscedastic
random observations $(X_{i},f_{i})$, $i=1,\dots,n$.
\end{rem}

\begin{proof}
With $f_{\lambda}=Kw_{\lambda}$ (cf.\ \eqref{eq:34}) we have that
\begin{align}
\MoveEqLeft[3]\E\biggl\Vert f_{\lambda}(\cdot)-\frac{1}{n}\sum_{j=1}^{n}k(\cdot,X_{j})\,\tilde{w}_{j}\biggr\Vert_{k}^{2}\nonumber \\
 & =\E\left\Vert f_{\lambda}\right\Vert _{k}^{2}-2\biggl\langle f_{\lambda}\biggl|\,\frac{1}{n}\sum_{j=1}^{n}k(\cdot,X_{j})\,\tilde{w}_{j}\biggr\rangle_{k}+\biggl\Vert\frac{1}{n}\sum_{j=1}^{n}k(\cdot,X_{j})\,\tilde{w}_{j}\biggr\Vert_{k}^{2}\nonumber \\
 & =\left\Vert f_{\lambda}\right\Vert _{k}^{2}\nonumber \\
 & \qquad-2\E\frac{1}{n}\sum_{j=1}^{n}\int_{\mathcal{X}}w_{\lambda}(y)\,k(y,X_{j})\,\tilde{w}_{j}\,P(dy)\label{eq:42-1}\\
 & \qquad+\frac{1}{n^{2}}\sum_{i,j=1}^{n}\E\tilde{w}_{i}\,k(X_{i},X_{j})\,\tilde{w}_{j}.\label{eq:43}
\end{align}
With~\eqref{eq:46} and~\eqref{eq:kNorm}, the term~\eqref{eq:42-1}
is 
\[
\E\frac{2}{n}\sum_{i=1}^{n}\int_{\mathcal{X}}w_{\lambda}(y)\,k(y,X_{j})\,\tilde{w}_{j}P(dy)=2\int_{\mathcal{X}}w_{\lambda}(y)\,f_{\lambda}(y)\,P(dy)=2\left\Vert f_{\lambda}\right\Vert _{k}^{2}.
\]
For the remaining term~\eqref{eq:43} involving all combinations
and by separating all combinations with $j=i$ from those with $j\not=i$
we find 
\begin{align*}
\MoveEqLeft[3]\frac{1}{n^{2}}\sum_{i,j=1}^{n}\E\tilde{w}_{i}\,k(X_{i},X_{j})\,\tilde{w}_{j}=\\
 & =\frac{1}{n^{2}}\sum_{i=1}^{n}\E\E\Big(\Big(\frac{f_{i}-f_{\lambda}(X_{i})}{\lambda}\bigg)^{2}k(X_{i},X_{i})\biggr|X_{i}\bigg)+\frac{1}{n}\sum_{i=1}^{n}\frac{1}{n}\E\E\bigg(\tilde{w}_{i}\sum_{j\not=i}k(X_{i},X_{j})\,\tilde{w}_{j}\biggr|X_{i}\bigg)\\
 & =\frac{1}{n^{2}\lambda^{2}}\sum_{i=1}^{n}\E\E\Big(\big(f_{i}-f_{0}(X_{i})\big)^{2}+\big(f_{0}(X_{i})-f_{\lambda}(X_{i})\big)^{2}\biggr|X_{i}\bigg)\,k(X_{i},X_{i})\\
 & \qquad+\frac{1}{n}\sum_{i=1}^{n}\frac{1}{n}\E\E\bigg(\left(\frac{f_{i}-f_{0}(X_{i})}{\lambda}+\frac{f_{0}(X_{i})-f_{\lambda}(X_{i})}{\lambda}\right)\sum_{j\not=i}k(X_{i},X_{j})\,\tilde{w}_{j}\biggr|X_{i}\bigg),
\end{align*}
as $2\E\Big(\big(f_{i}-f_{0}(X_{i})\big)\big(f_{0}(X_{i})-f_{\lambda}(X_{i})\big)\biggr|X_{i}\bigg)=0$
by the orthogonality relation~\eqref{eq:Orthogonal}.

With the kernel trick (see Mercer's theorem in Remark~\ref{rem:Mercer}),
and independence of $(X_{i},f_{i})$ from $(X_{j},f_{j})$ for $j\not=i$
we have that
\begin{align*}
\E\E\bigg(\frac{f_{i}-f_{0}(X_{i})}{\lambda}k(X_{i},X_{j})\,\tilde{w}_{j}\biggr|X_{i}\bigg) & =\sum_{\ell=1}^{\infty}\E\bigg(\frac{f_{i}-f_{0}(X_{i})}{\lambda}\sigma_{\ell}^{2}\phi_{\ell}(X_{i})\phi_{\ell}(X_{j})\,\tilde{w}_{j}\bigg)\\
 & =\sum_{\ell=1}^{\infty}\sigma_{\ell}^{2}\E\bigg(\frac{f_{i}-f_{0}(X_{i})}{\lambda}\phi_{\ell}(X_{i})\bigg)\E\bigg(\phi_{\ell}(X_{j})\,\tilde{w}_{j}\bigg).
\end{align*}
Again, by the orthogonality relation~\eqref{rem:Mercer} we concluded
that $\E\Big(\frac{f_{i}-f_{0}(X_{i})}{\lambda}\phi_{\ell}(X_{i})\Big)=0$.
Summing up we have 
\begin{align*}
\MoveEqLeft[3]\frac{1}{n^{2}}\sum_{i,j=1}^{n}\E\tilde{w}_{i}\,k(X_{i},X_{j})\,\tilde{w}_{j}=\\
 & =\frac{1}{n^{2}\lambda^{2}}\sum_{i=1}^{n}\E\E\Big(\big(f_{i}-f_{0}(X_{i})\big)^{2}+\big(f_{0}(X_{i})-f_{\lambda}(X_{i})\big)^{2}\biggr|X_{i}\bigg)\,k(X_{i},X_{i})\\
 & \qquad+\frac{1}{n}\sum_{i=1}^{n}\frac{1}{n}\E\frac{f_{0}(X_{i})-f_{\lambda}(X_{i})}{\lambda}\sum_{j\not=i}\E\bigg(k(X_{i},X_{j})\,\tilde{w}_{j}\biggr|X_{i}\bigg)\\
 & =\frac{1}{n^{2}\lambda^{2}}\sum_{i=1}^{n}\E\Big(\var(f\mid X_{i})+\big(f_{0}(X_{i})-f_{\lambda}(X_{i})\big)^{2}\bigg)k(X_{i},X_{i})\\
 & \qquad+\frac{1}{n}\sum_{i=1}^{n}\frac{1}{n}\sum_{j\not=i}\E w_{\lambda}(X_{i})\,f_{\lambda}(X_{i}),
\end{align*}
where we have employed~\eqref{eq:46} again. As above, we have again
that 
\[
\E w_{\lambda}(X_{i})f_{\lambda}(X_{i})=\left\Vert f_{\lambda}\right\Vert _{k}^{2}.
\]
Collecting terms we find that 
\begin{align*}
\MoveEqLeft[3]\E\biggl\Vert f_{\lambda}(\cdot)-\frac{1}{n}\sum_{j=1}^{n}k(\cdot,X_{j})\,\tilde{w}_{j}\biggr\Vert_{k}^{2}=\left\Vert f_{\lambda}\right\Vert _{k}^{2}-2\left\Vert f_{\lambda}\right\Vert _{k}^{2}\\
 & +\frac{n-1}{n}\left\Vert f_{\lambda}\right\Vert _{k}^{2}+\frac{1}{\lambda^{2}n}\int_{\mathcal{X}}\Big(\var(f|\,x)+\big(f_{0}(x)-f_{\lambda}(x)\big)^{2}\Big)k(x,x)P(dx)
\end{align*}
and thus the assertion.
\end{proof}
\begin{rem}[Local correlation]
The coefficients $\tilde{w}_{i}$ depend explicitly on~$f_{i}$.
This explicit relation will actually allow us to dampen, even to remove
the noise from the estimators. Indeed, the noise~$f_{i}$ and the
coefficients~$\tilde{w}_{i}$ are utmost correlated, it holds that
\begin{equation}
\corr(f_{i},\tilde{w}_{i}\mid X_{i}=x)=1.\label{eq:Corr}
\end{equation}

To accept this strong correlation property recall the relation $\tilde{w}_{i}=\frac{1}{\lambda}f_{i}+\frac{1}{\lambda}f_{\lambda}(X_{i})$,
which exhibits\textemdash provided that $X_{i}=x$ is kept fixed\textemdash an
affine relation between $\tilde{w}_{i}$ and $f_{i}$ and thus~\eqref{eq:Corr}.

\end{rem}

\subsection{\label{subsec:Relation}The relation of the SAA estimator $\hat{f}_{n}$
and $\tilde{f}_{n}$}

The unbiased estimator $\tilde{f}_{n}$ and the estimator of interest
$\hat{f}_{n}$ are connected explicitly in the following way.
\begin{lem}
It holds that\footnote{$\big(\lambda+\frac{1}{n}K\big){}_{j}^{-1}$ is the $j$\nobreakdash-row
(or column, as $K$ is symmetric) of the matrix $\big(\lambda+\frac{1}{n}K\big){}^{-1}$.} 
\[
\hat{f}_{n}(\cdot)-\tilde{f}_{n}(\cdot)=\frac{1}{n}\sum_{j=1}^{n}\tilde{r}_{n}^{\top}\Big(\lambda+\frac{1}{n}K\Big)_{j}^{-1}\,k(\cdot,X_{j})
\]
and
\begin{equation}
\|\hat{f}_{n}-\tilde{f}_{n}\|_{k}^{2}=\frac{1}{n}\tilde{r}^{\top}\Big(\lambda+\frac{1}{n}K\Big)^{-1}\,\frac{1}{n}K\,\Big(\lambda+\frac{1}{n}K\Big)^{-1}\tilde{r};\label{eq:13}
\end{equation}
here, $K$ is the random Gramian matrix with entries $K_{ij}=k(X_{i},X_{j})$.
\end{lem}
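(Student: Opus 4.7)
My plan is to derive both identities by the same mechanism: express the two weight vectors $\hat{w}$ and $\tilde{w}$ through the common linear system $\lambda+\frac{1}{n}K$, subtract, and then feed the difference into the kernel expansion.

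First I would read off $\hat{w}=\bigl(\lambda+\frac{1}{n}K\bigr)^{-1}f$ directly from Corollary~\ref{cor:Estimator}. For $\tilde{w}$ the starting point is the very definition of the residual vector in the statement preceding Lemma~\ref{lem:Residuals}, namely $\tilde{r}_i=f_i-\lambda\tilde{w}_i-\frac{1}{n}\sum_j k(X_i,X_j)\tilde{w}_j$, which in vector form says $\tilde{r}=f-\bigl(\lambda+\frac{1}{n}K\bigr)\tilde{w}$. Solving this for $\tilde{w}$ gives $\tilde{w}=\bigl(\lambda+\frac{1}{n}K\bigr)^{-1}(f-\tilde{r})$, so subtracting yields the clean identity $\hat{w}-\tilde{w}=\bigl(\lambda+\frac{1}{n}K\bigr)^{-1}\tilde{r}$.

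Substituting back into $\hat{f}_n(\cdot)-\tilde{f}_n(\cdot)=\frac{1}{n}\sum_{j=1}^{n}(\hat{w}_j-\tilde{w}_j)\,k(\cdot,X_j)$ and reading off the $j$th component of $\bigl(\lambda+\frac{1}{n}K\bigr)^{-1}\tilde{r}$ as $\tilde{r}^{\top}\bigl(\lambda+\frac{1}{n}K\bigr)^{-1}_{j}$ (using symmetry so that the $j$th row equals the $j$th column) delivers the first claimed formula.

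For the norm identity, I would use the basic RKHS computation that arises from~\eqref{eq:InnerProduct}: for any coefficient vector $c$, $\bigl\|\frac{1}{n}\sum_j c_j k(\cdot,X_j)\bigr\|_k^2=\frac{1}{n^2}c^{\top}Kc$. Applying this to $c=\hat{w}-\tilde{w}=\bigl(\lambda+\frac{1}{n}K\bigr)^{-1}\tilde{r}$ and using the symmetry of $\bigl(\lambda+\frac{1}{n}K\bigr)^{-1}$ gives $\|\hat{f}_n-\tilde{f}_n\|_k^2=\frac{1}{n^2}\tilde{r}^{\top}\bigl(\lambda+\frac{1}{n}K\bigr)^{-1}K\bigl(\lambda+\frac{1}{n}K\bigr)^{-1}\tilde{r}$, which is exactly~\eqref{eq:13} after pulling one factor of $1/n$ inside $\frac{1}{n}K$.

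There is really no hard step here; the whole proof is linear algebra once one recognises the right vector form of the residual. The only point that deserves care is the identification of $\tilde{w}$ with the solution of a perturbed version of the same linear system that defines $\hat{w}$: it is precisely this parallel structure that makes $\hat{w}-\tilde{w}$ equal to $\bigl(\lambda+\frac{1}{n}K\bigr)^{-1}\tilde{r}$ and thereby cleanly isolates the bias-corrected noise $\tilde{r}$ as the sole source of the discrepancy between the SAA estimator and the auxiliary estimator.
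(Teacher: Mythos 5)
Your proposal is correct and follows essentially the same route as the paper: both proofs reduce to the single identity $\hat{w}-\tilde{w}=\bigl(\lambda+\tfrac{1}{n}K\bigr)^{-1}\tilde{r}$ (the paper obtains it by substituting $f_i=\lambda\hat{w}_i+\tfrac{1}{n}\sum_j k(X_i,X_j)\hat{w}_j$ into the definition of $\tilde{r}_i$, you by writing both weight vectors through the same linear system and subtracting, which is the same algebra) and then evaluate the kernel expansion and its $\|\cdot\|_k$-norm via~\eqref{eq:InnerProduct}. No gap.
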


\begin{proof}
By the definition of $\tilde{r}_{i}$ and~\eqref{eq:wEstimator},
\begin{align*}
\tilde{r}_{i} & =f_{i}-\lambda\,\tilde{w}_{i}-\frac{1}{n}\sum_{j=1}^{n}k(X_{i},X_{j})\,\tilde{w}_{j}\\
 & =\lambda\,\hat{w}_{i}+\frac{1}{n}\sum_{j=1}^{n}k(X_{i},X_{j})\hat{w}_{j}-\lambda\,\tilde{w}_{i}-\frac{1}{n}\sum_{j=1}^{n}k(X_{i},X_{j})\,\tilde{w}_{j}\\
 & =\Big(\lambda+\frac{1}{n}K\Big)_{i}(\hat{w}-\tilde{w})
\end{align*}
and thus 
\[
\hat{w}-\tilde{w}=\Big(\lambda+\frac{1}{n}K\Big)^{-1}\tilde{r}.
\]
Now recall that $\hat{f}_{n}(\cdot)-\tilde{f}_{n}(\cdot)=\frac{1}{n}\sum_{j=1}^{n}(\hat{w}_{j}-\tilde{w}_{j})\,k(\cdot,X_{j})$
and the definition of the inner product~$\left\langle \cdot\mid\cdot\right\rangle _{k}$
to accept the remaining assertion.
\end{proof}
In what follows we provide the relation between the estimator of interest~$\hat{f}$
and the auxiliary estimator~$\tilde{f}$. The following Lemma is
essential, it allows to get rid of the random matrix $K$ and its
inverse in~\eqref{eq:13}.
\begin{lem}
\label{lem:2}For any nonnegative definite matrix $K$ (i.e., $K\ge0$)
it holds that 
\begin{equation}
\big(\lambda+K\big)^{-1}K\big(\lambda+K\big)^{-1}\le\frac{1}{4\lambda}\label{eq:19}
\end{equation}
 in Loewner order.
\end{lem}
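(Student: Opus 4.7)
The plan is to reduce the operator inequality to a scalar inequality by simultaneous diagonalization, since all three factors $(\lambda+K)^{-1}$, $K$, and $(\lambda+K)^{-1}$ are polynomials (or rational functions) in the single symmetric matrix $K$ and therefore commute and share an eigenbasis.

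First I would invoke the spectral theorem: write $K = U^\top D U$ with $U$ orthogonal and $D = \operatorname{diag}(\mu_1,\dots,\mu_n)$, where $\mu_i \geq 0$ by the assumption $K \geq 0$. Then both $(\lambda+K)^{-1}$ and the product $(\lambda+K)^{-1} K (\lambda+K)^{-1}$ are diagonalized by the same $U$, with eigenvalues $1/(\lambda+\mu_i)$ and $\mu_i/(\lambda+\mu_i)^2$ respectively. Hence the Loewner inequality~\eqref{eq:19} is equivalent to the scalar inequalities
\[
\frac{\mu_i}{(\lambda+\mu_i)^2} \le \frac{1}{4\lambda}, \qquad i=1,\dots,n.
\]

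The remaining step is elementary: multiply out to see that the inequality is equivalent to $4\lambda\mu_i \le (\lambda+\mu_i)^2$, which rearranges to $0 \le (\lambda - \mu_i)^2$, i.e., a direct consequence of the AM--GM inequality applied to the nonnegative numbers $\lambda$ and $\mu_i$. Equality is attained precisely when $\mu_i = \lambda$. Since the scalar bound holds uniformly over all eigenvalues, the corresponding operator inequality in Loewner order follows at once by transforming back with $U$.

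There is no real obstacle here; the only thing one must be slightly careful about is to note that $\lambda + K$ is genuinely invertible (which is immediate from $\lambda > 0$ and $K \ge 0$, since then all eigenvalues $\lambda + \mu_i$ are strictly positive), so that the expression on the left of~\eqref{eq:19} is well defined.
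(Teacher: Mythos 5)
Your proof is correct and rests on exactly the same key inequality as the paper's, namely $4\lambda\mu\le(\lambda+\mu)^{2}$, i.e.\ $0\le(\lambda-\mu)^{2}$; the paper simply writes this at the matrix level as $4\lambda K\le(\lambda+K)^{2}$ and conjugates by $(\lambda+K)^{-1}$, whereas you pass through the spectral decomposition first. The two presentations are interchangeable since all matrices involved commute.
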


\begin{proof}
It holds that $0\le(\lambda-K)^{2}=\lambda^{2}-2\lambda K+K^{2}$
and thus $4\lambda K\le\lambda^{2}+2\lambda K+K^{2}=(\lambda+K)^{2}$.
The assertion follows after multiplying with the corresponding inverse
from left and right.
\end{proof}
\begin{prop}
\label{prop:Norm2}It holds that
\begin{equation}
\E\|\hat{f}_{n}-\tilde{f}_{n}\|_{k}^{2}\le\frac{C_{2}}{4\lambda^{3}n}\label{eq:57}
\end{equation}
for a constant~$C_{2}>0$ independent of~$\lambda$ and~$n$.
\end{prop}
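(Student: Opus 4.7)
The plan is to combine the explicit formula~\eqref{eq:13} for $\|\hat{f}_{n}-\tilde{f}_{n}\|_{k}^{2}$ with the Loewner bound of Lemma~\ref{lem:2} and the $L^{2}$\nobreakdash-style estimate for the residuals coming from Theorem~\ref{thm:Norm1}. Applying Lemma~\ref{lem:2} to the (random) nonnegative definite matrix $\frac{1}{n}K$ in place of $K$ gives
\[
\Bigl(\lambda+\tfrac{1}{n}K\Bigr)^{-1}\tfrac{1}{n}K\Bigl(\lambda+\tfrac{1}{n}K\Bigr)^{-1}\le\frac{1}{4\lambda}\cdot I,
\]
pointwise in $\omega$. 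Plugging this into~\eqref{eq:13} yields the sample path bound
\[
\|\hat{f}_{n}-\tilde{f}_{n}\|_{k}^{2}\le\frac{1}{4\lambda\,n}\,\|\tilde{r}\|^{2}=\frac{1}{4\lambda\,n}\sum_{i=1}^{n}\tilde{r}_{i}^{2},
\]
where the last equality uses $\tilde{r}_{i}=\tilde{r}_{n}(X_{i})$ from Lemma~\ref{lem:Residuals}. The attractive point here is that the randomness of~$K$ has been absorbed completely into the deterministic constant $\frac{1}{4\lambda}$, so no spectral control over the Gramian is needed.

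Taking expectations reduces the problem to estimating $\sum_{i}\E\,\tilde{r}_{n}(X_{i})^{2}$. Since the sample $(X_{i},f_{i})$ is i.i.d., the summands have a common value, which I would bound by using that $\tilde{r}_{n}\in\mathcal{H}_{k}$ and that point evaluations are continuous in the kernel norm (Proposition before~\eqref{eq:Point}); explicitly,
\[
\tilde{r}_{n}(X_{i})^{2}\le k(X_{i},X_{i})\,\|\tilde{r}_{n}\|_{k}^{2}\le\|k\|_{\infty}\,\|\tilde{r}_{n}\|_{k}^{2},
\]
where $\|k\|_{\infty}\coloneqq\sup_{x\in\operatorname{supp}P}k(x,x)<\infty$ by the standing assumption that $k$ is uniformly bounded. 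Summing over~$i$, taking expectations and invoking Theorem~\ref{thm:Norm1} with its bound $\E\|\tilde{r}_{n}\|_{k}^{2}\le\frac{C_{1}}{\lambda^{2}n}$ gives
\[
\E\,\|\hat{f}_{n}-\tilde{f}_{n}\|_{k}^{2}\le\frac{1}{4\lambda\,n}\cdot n\cdot\|k\|_{\infty}\,\E\|\tilde{r}_{n}\|_{k}^{2}\le\frac{\|k\|_{\infty}\,C_{1}}{4\,\lambda^{3}\,n},
\]
which is the claimed bound with $C_{2}\coloneqq\|k\|_{\infty}\,C_{1}$, independent of $\lambda$ and $n$.

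I expect the only delicate step to be the simultaneous use of pathwise (Loewner) and expected (norm) bounds: the residual vector $\tilde{r}$ and the Gramian $K$ are dependent random objects, so one cannot bound their contributions separately in expectation. Lemma~\ref{lem:2} neatly resolves this by replacing the $K$\nobreakdash-dependent middle factor with a deterministic multiple of the identity \emph{before} taking expectations; everything else is routine manipulation of the reproducing property and an appeal to the norm estimate already established in Theorem~\ref{thm:Norm1}.
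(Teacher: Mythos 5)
Your argument is correct, and it coincides with the paper's proof up to and including the pathwise reduction $\|\hat{f}_{n}-\tilde{f}_{n}\|_{k}^{2}\le\frac{1}{4\lambda n}\sum_{i=1}^{n}\tilde{r}_{n}(X_{i})^{2}$ obtained from~\eqref{eq:13}, Lemma~\ref{lem:2} applied to $\frac{1}{n}K$, and Lemma~\ref{lem:Residuals}; where it genuinely departs is in the control of $\E\tilde{r}_{n}(X_{i})^{2}$. The paper expands this expectation from scratch: it splits the double sum over index combinations ($j=\ell=i$, $j=\ell\ne i$, all distinct), uses the conditional identity $f_{0}-f_{\lambda}=\lambda w_{\lambda}$ from~\eqref{eq:36} together with the orthogonality relation~\eqref{eq:Orthogonal}, and verifies that the $O(1)$ contributions cancel so that only an $O(\lambda^{-2}n^{-1})$ remainder survives. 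You instead observe that $\tilde{r}_{n}\in\mathcal{H}_{k}$ pathwise, so the reproducing property gives $\tilde{r}_{n}(X_{i})^{2}\le k(X_{i},X_{i})\,\|\tilde{r}_{n}\|_{k}^{2}\le\|k\|_{\infty}\,\|\tilde{r}_{n}\|_{k}^{2}$ almost surely, and then recycle Theorem~\ref{thm:Norm1}. This is shorter, and it elegantly sidesteps the dependence between $\tilde{r}_{n}$ and the evaluation point $X_{i}$, which the paper must handle by isolating the $j=i$ terms; there is no circularity, since the paper proves Theorem~\ref{thm:Norm1} independently of Proposition~\ref{prop:Norm2}. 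The costs are minor: you pick up the extra factor $\|k\|_{\infty}=\sup_{x}k(x,x)$ in the constant (harmless under the paper's standing assumption of uniformly bounded kernels), and your $C_{2}=\|k\|_{\infty}C_{1}$ inherits whatever uniformity in~$\lambda$ Theorem~\ref{thm:Norm1} asserts for $C_{1}$, rather than yielding the paper's independent explicit expression. Both routes give the same rate $\lambda^{-3}n^{-1}$.
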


\begin{proof}
From~\eqref{eq:13} and Lemma~\ref{lem:2}, applied to the matrix
$\frac{1}{n}K$, we conclude that 
\begin{equation}
\|\hat{f}_{n}-\tilde{f}_{n}\|_{k}^{2}=\frac{1}{n}\tilde{r}^{\top}\Big(\lambda+\frac{1}{n}K\Big)^{-1}\frac{1}{n}K\Big(\lambda+\frac{1}{n}K\Big)^{-1}\tilde{r}\le\frac{1}{4\lambda}\cdot\frac{1}{n}\tilde{r}^{\top}\tilde{r}.\label{eq:Res1}
\end{equation}
With Lemma~\ref{lem:Residuals} it follows that 
\[
\E\|\hat{f}_{n}-\tilde{f}_{n}\|_{k}^{2}\le\frac{1}{4\lambda}\cdot\frac{1}{n}\sum_{i=1}^{n}\E\tilde{r}_{n}(X_{i})^{2}=\frac{1}{4\lambda}\cdot\E\tilde{r}_{n}(X_{i})^{2}
\]
for any $i=1,\dots,n$. Employing the definition of $\tilde{r}_{n}(\cdot)$
(cf.\ \eqref{eq:tilder}) the right hand side expression expands
as 
\begin{align}
\E\tilde{r}_{n}(X_{i})^{2} & =\E\left(f_{\lambda}(X_{i})-\frac{1}{n}\sum_{j=1}^{n}k(X_{i},X_{j})\frac{f_{j}-f_{\lambda}(X_{j})}{\lambda}\right)^{2}\nonumber \\
 & =\E f_{\lambda}(X_{i})^{2}\nonumber \\
 & \qquad-\frac{2}{n}\sum_{j=1}^{n}\E f_{\lambda}(X_{i})\cdot k(X_{i},X_{j})\frac{f_{j}-f_{\lambda}(X_{j})}{\lambda}\label{eq:40}\\
 & \qquad+\frac{1}{n^{2}}\sum_{j,\ell=1}^{n}\E k(X_{i},X_{j})\frac{f_{j}-f_{\lambda}(X_{j})}{\lambda}k(X_{i},X_{\ell})\frac{f_{\ell}-f_{\lambda}(X_{\ell})}{\lambda}.\label{eq:41}
\end{align}
We now treat~\eqref{eq:40} and~\eqref{eq:41} separately. As for
the first term we have for $j\not=i$ that 
\begin{align*}
\MoveEqLeft[3]\E f_{\lambda}(X_{i})\cdot k(X_{i},X_{j})\frac{f_{j}-f_{\lambda}(X_{j})}{\lambda}\\
 & =\E f_{\lambda}(X_{i})\E\bigg(\left.k(X_{i},X_{j})\frac{f_{j}-f_{\lambda}(X_{j})}{\lambda}\right|X_{i},X_{j}\bigg)\\
 & =\E f_{\lambda}(X_{i})\E\bigg(\left.k(X_{i},X_{j})\frac{f_{0}(X_{j})-f_{\lambda}(X_{j})}{\lambda}\right|X_{i}\bigg)\\
 & =\E f_{\lambda}(X_{i})\cdot\E\bigg(\left.k(X_{i},X_{j})\,w_{\lambda}(X_{j})\right|X_{i}\bigg)\\
 & =\E f_{\lambda}(X_{i})\cdot f_{\lambda}(X_{i})\\
 & =\E f_{\lambda}(X_{i})^{2},
\end{align*}
where we have used~\eqref{eq:36}. For $j=i$, the term~\eqref{eq:40}
is 
\[
\E f_{\lambda}(X_{i})k(X_{i},X_{i})\frac{f_{0}(X_{i})-f_{\lambda}(X_{i})}{\lambda}=\E f_{\lambda}(X_{i})k(X_{i},X_{i})w_{\lambda}(X_{i}).
\]

For the remaining term~\eqref{eq:41} and $i$, $j$ and $\ell$
all distinct we find that 
\begin{align*}
\MoveEqLeft[3]\E\E\bigg(\left.k(X_{i},X_{j})\frac{f_{j}-f_{\lambda}(X_{j})}{\lambda}k(X_{i},X_{\ell})\frac{f_{\ell}-f_{\lambda}(X_{\ell})}{\lambda}\right|X_{i}\bigg)\\
 & =\E\E\bigg(\left.k(X_{i},X_{j})\frac{f_{0}(X_{j})-f_{\lambda}(X_{j})}{\lambda}\right|X_{i}\bigg)\E\bigg(\left.k(X_{i},X_{\ell})\frac{f_{0}(X_{\ell})-f_{\lambda}(X_{\ell})}{\lambda}\right|X_{i}\bigg)\\
 & =\E\E\bigg(\left.k(X_{i},X_{j})w_{\lambda}(X_{j})\right|X_{i}\bigg)\E\bigg(\left.k(X_{i},X_{\ell})w_{\lambda}(X_{\ell})\right|X_{i}\bigg)\\
 & =\E f_{\lambda}(X_{i})^{2},
\end{align*}
as $X_{j}$ and $X_{\ell}$ are independent.

There are $2n-1$ out of $n^{2}$ combinations when not all $i$,
$j$ and $\ell$ are distinct, in which case the expression~\eqref{eq:41}
is finite with factor~$\frac{1}{\lambda^{2}}$ only if $j=\ell$.
Collecting now all terms and connecting with~\eqref{eq:Res1} reveals
the assertion~\eqref{eq:57} of the theorem.
\end{proof}
The elementary relation in Lemma~\ref{lem:2} is of crucial importance
in the preceding proof, as it allows to get rid of the random matrices
$\lambda+\frac{1}{n}K$ and, even more importantly, its inverse. We
discuss some situations, where the bound can be improved.
\begin{rem}
Suppose the kernel function is uniformly bounded from below, 
\begin{equation}
k(\cdot,:)\ge k_{\mathit{min}}>0\label{eq:kMin}
\end{equation}
$P^{2}$\nobreakdash-almost everywhere on $\mathcal{X}\times\mathcal{X}$.
Then the assertion of Proposition~\ref{prop:Norm2} is
\[
\E\|\hat{f}_{n}-\tilde{f}_{0}\|_{k}\le\frac{C_{2}}{k_{\mathit{min}}\,\lambda^{2}\,n}.
\]

Indeed, observe first that $k_{\mathit{min}}\,K\le K^{2}\le(\lambda+K)(\lambda+K)$
so that 
\begin{equation}
\big(\lambda+K\big)^{-1}K\big(\lambda+K\big)^{-1}\le\frac{1}{k_{\mathit{min}}}.\label{eq:18}
\end{equation}
The same proof as Proposition~\ref{prop:Norm2} applies (with~\eqref{eq:19}
replaced by~\eqref{eq:18}) and we conclude with 
\begin{equation}
\E\|\hat{f}_{n}-\tilde{f}_{n}\|_{k}^{2}\le\frac{1}{k_{\mathit{min}}\,\lambda^{2}n}.\label{eq:56}
\end{equation}
\end{rem}

Note that the assumption particularly implies that the support $\operatorname{supp}P$
is compact in $\mathcal{X}$ (see Footnote~\ref{fn:Support} on page~\pageref{fn:Support}).
But kernel functions~$k$, which are not compactly supported, enjoy
the property~\eqref{eq:kMin} on compact subsets of~$\mathcal{X}$
so that this assumption is not unusual in applications.
\begin{rem}
The inequality~\eqref{eq:19} is crucial in the analysis above as
it allows to get rid of the inverse of a matrix with random coefficients.
We assume that a better estimate at this point will likely improve
the quality of the approximation as in~\eqref{eq:56}; perhaps the
estimates on eigenvalues presented by \citet{ShaweTaylor2005} can
be of help to improve the inequality.
\end{rem}

\section{\label{sec:Convergence}Convergence in norm and consistency }

We can now connect the auxiliary and partial results of the preceding
sections to present our main results. They identify the limit in the
initial problem~\eqref{eq:Master} and describe convergence of the
estimator $\hat{f}_{n}$ towards $f_{\lambda}$ and towards $f_{0}$,
as well as consistency of the estimators.

\subsection{Convergence in norm}

The estimator $\hat{f}_{n}$ converges to $f_{\lambda}$ in expected
norm, as the sample size increases.
\begin{thm}
\label{thm:6}For the estimator $\hat{f}_{n}(\cdot)$ it holds that
$\E\|\hat{f}_{n}-f_{\lambda}\|_{k}^{2}\le\frac{2C_{1}}{\lambda^{2}\,n}+\frac{C_{2}}{\lambda^{3}\,n}$,
where $C_{1}$ and $C_{2}>0$ are constants independent of $\lambda$
and $n$.
\end{thm}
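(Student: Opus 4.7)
The strategy is essentially to combine the two bounds we have already established and to let the auxiliary estimator $\tilde{f}_n$ act as a bridge between $f_\lambda$ and $\hat{f}_n$. By the triangle inequality in $\mathcal{H}_k$,
\begin{equation*}
\|\hat{f}_n - f_\lambda\|_k \le \|\hat{f}_n - \tilde{f}_n\|_k + \|\tilde{f}_n - f_\lambda\|_k,
\end{equation*}
so squaring and using the elementary inequality $(a+b)^2 \le 2a^2 + 2b^2$ gives
\begin{equation*}
\|\hat{f}_n - f_\lambda\|_k^2 \le 2\|\hat{f}_n - \tilde{f}_n\|_k^2 + 2\|\tilde{f}_n - f_\lambda\|_k^2.
\end{equation*}

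First I would take expectations on both sides. Then I would invoke Theorem~\ref{thm:Norm1} to control the second term, which yields the contribution $\frac{2C_1}{\lambda^2 n}$, and Proposition~\ref{prop:Norm2} to control the first term, which yields the contribution $\frac{C_2}{2\lambda^3 n} \le \frac{C_2}{\lambda^3 n}$. Adding the two estimates delivers the claimed bound directly, with the same constants $C_1$ and $C_2$ appearing in the earlier statements.

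There is essentially no new obstacle at this step; the real work was the variance calculation behind Theorem~\ref{thm:Norm1} and the use of the Loewner bound of Lemma~\ref{lem:2} inside Proposition~\ref{prop:Norm2}. The only minor point to be careful about is the factor of two introduced by the quadratic triangle inequality, which is why the bound carries a $2C_1$ rather than $C_1$; the $\lambda^{-3}$ term in the second summand dominates the $\lambda^{-2}$ term as $\lambda \to 0$, which makes transparent why regularization cannot be relaxed too aggressively when one eventually lets $\lambda = \lambda_n$ depend on~$n$ in the consistency arguments of Section~\ref{sec:Consistency}.
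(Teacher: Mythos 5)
Your proposal is correct and follows exactly the paper's own argument: the triangle inequality through the auxiliary estimator $\tilde{f}_n$, the quadratic bound $(a+b)^2\le 2a^2+2b^2$, and then Theorem~\ref{thm:Norm1} and Proposition~\ref{prop:Norm2} for the two resulting terms, with the factor $2\cdot\frac{C_2}{4\lambda^3 n}=\frac{C_2}{2\lambda^3 n}\le\frac{C_2}{\lambda^3 n}$ handled correctly. Nothing is missing.
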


\begin{proof}
By the triangle inequality, Theorem~\ref{thm:Norm1} and Proposition~\ref{prop:Norm2}
we find that 
\begin{align*}
\E\|\hat{f}_{n}-f_{\lambda}\|_{k}^{2} & \le\E\big(\|f_{\lambda}-\tilde{f}_{n}\|_{k}+\|\tilde{f}_{n}-\hat{f}_{n}\|_{k}\big)^{2}\\
 & \le2\E\|f_{\lambda}-\tilde{f}_{n}\|_{k}^{2}+2\E\|\tilde{f}_{n}-\hat{f}_{n}\|_{k}^{2}\\
 & \le\frac{2C_{1}}{\lambda^{2}\,n}+\frac{C_{2}}{2\lambda^{3}\,n},
\end{align*}
the assertion.
\end{proof}
\begin{cor}[Convergence in $L^{2}$]
It holds that $\E\|\hat{f}_{n}-f_{\lambda}\|_{2}^{2}\le\frac{2C_{1}}{\lambda^{2}\,n}+\frac{C_{2}}{\lambda^{3}\,n}$,
where $C_{1}$ and $C_{2}>0$ are constants independent of $\lambda$
and $n$.
\end{cor}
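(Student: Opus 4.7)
The plan is to reduce the corollary to Theorem~\ref{thm:6} by exploiting the continuous embedding of $\mathcal{H}_k$ into $L^2$ provided by Proposition~\ref{prop:Norm}. Recall that proposition yields the deterministic inequality $\|g\|_2 \le \|K\|^{\nicefrac{1}{2}}\,\|g\|_k$ for every $g\in\mathcal{H}_k$, where $\|K\|=\|K\colon L^2\to L^2\|$ depends only on the kernel and the design measure~$P$, not on the sample or on the regularization parameter~$\lambda$.

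First I would apply this inequality pointwise in $\omega$ to the random function $g = \hat{f}_n - f_\lambda$, which is an element of $\mathcal{H}_k$ because both $\hat{f}_n$ (a finite linear combination of kernel sections, see~\eqref{eq:SAA}) and $f_\lambda = Kw_\lambda$ (see~\eqref{eq:64}) lie in $\mathcal{H}_k$. Squaring gives the a.s.\ bound $\|\hat{f}_n - f_\lambda\|_2^2 \le \|K\|\cdot\|\hat{f}_n - f_\lambda\|_k^2$. Taking expectations and invoking Theorem~\ref{thm:6} yields
\[
\E\|\hat{f}_n - f_\lambda\|_2^2 \le \|K\|\cdot\E\|\hat{f}_n - f_\lambda\|_k^2 \le \|K\|\Big(\frac{2C_1}{\lambda^2\,n}+\frac{C_2}{2\lambda^3\,n}\Big).
\]
Absorbing $\|K\|$ into the constants (which remain independent of $\lambda$ and $n$, since $\|K\|$ depends only on the kernel and on~$P$) delivers the stated bound.

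No step is genuinely difficult here; the whole content is the observation that the kernel-norm bound of the previous theorem transfers to $L^2$ via the isometric embedding behind Proposition~\ref{prop:Norm}. The only minor point to be careful about is that $\|K\|$ is finite, which is guaranteed by the proposition preceding Proposition~\ref{prop:23} under the standing assumption that the kernel is continuous and uniformly bounded.
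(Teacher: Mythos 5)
Your proof is correct and follows essentially the same route as the paper, which simply invokes Proposition~\ref{prop:Norm} to transfer the kernel-norm bound of Theorem~\ref{thm:6} to $L^2$. Your added care in noting that the factor $\|K\|$ must be absorbed into the constants is a reasonable refinement of the paper's one-line argument, not a different approach.
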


\begin{proof}
The assertion is immediate with Proposition~\ref{prop:Norm}.
\end{proof}
\begin{thm}
\label{thm:63}Suppose that $f_{0}=Kw_{0}$ with $w_{0}\in\mathcal{H}_{k}$
so that $f_{0}$ is in the Hilbert space $\mathcal{H}_{k}$ as well.
Then there are constants $C_{0}$, $C_{1}$ and $C_{2}$, all independent
of $n$ and $\lambda$, so that 
\begin{equation}
\E\|f_{0}-\hat{f}_{n}\|_{k}^{2}\le2C_{0}\lambda^{2}+\frac{4C_{1}}{\lambda^{2}\,n}+\frac{C_{2}}{\lambda^{3}\,n}.\label{eq:62-1}
\end{equation}
\end{thm}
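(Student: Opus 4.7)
The plan is to use a standard bias--variance decomposition by inserting the intermediate quantity $f_\lambda$, which is the continuous optimum from~\eqref{eq:Genuine}. Specifically, I would apply the triangle inequality
\[
\|f_0-\hat{f}_n\|_k \le \|f_0-f_\lambda\|_k + \|f_\lambda-\hat{f}_n\|_k
\]
and then use the elementary inequality $(a+b)^2\le 2a^2+2b^2$ to pass to squared norms and, after taking expectations, split the bound into a deterministic approximation error (the ``bias'') and a stochastic estimation error (the ``variance'').

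For the bias term, the hypothesis of the theorem is exactly what Proposition~\ref{prop:f0} requires: $f_0$ lies in the range of $K$, via $f_0 = Kw_0$ with $w_0\in\mathcal{H}_k$. Note also that $w_0\in\mathcal{H}_k$ is equivalent, through the isometry $K^{\nicefrac12}\colon L^2\to\mathcal{H}_k$ of Proposition~\ref{prop:Norm}, to the existence of some $w\in L^2$ with $w_0=K^{\nicefrac12}w$, so the hypothesis of Proposition~\ref{prop:f0} is literally met. The proposition then yields
\[
\|f_0-f_\lambda\|_k^{2}\le C_0\,\lambda^{2}
\]
for a constant $C_0$ depending only on $w_0$ (after absorbing a square into the constant).

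For the stochastic term, I would invoke Theorem~\ref{thm:6} directly, giving
\[
\E\|\hat{f}_n-f_\lambda\|_k^2\le \frac{2C_1}{\lambda^{2}\,n}+\frac{C_2}{\lambda^{3}\,n}.
\]
Combining with the bias estimate and the $2a^2+2b^2$ bound yields the claim~\eqref{eq:62-1} after relabeling constants. There is no serious obstacle here: the theorem is essentially a packaging result that collects Proposition~\ref{prop:f0} (bias) and Theorem~\ref{thm:6} (variance) into one bias--variance inequality. The only point that deserves care is recognizing that the assumption $f_0=Kw_0$ with $w_0\in\mathcal H_k$ is precisely the regularity condition needed for the bias bound of order $\lambda^2$ to hold; without this source condition one would only obtain $\|f_0-f_\lambda\|_k\to 0$ without a rate, and the sharp trade-off between the $\lambda^2$ and $1/(\lambda^3 n)$ terms visible in~\eqref{eq:62-1} would collapse.
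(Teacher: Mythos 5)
Your proposal is correct and follows exactly the route of the paper's own proof: insert $f_{\lambda}$ via the triangle inequality, apply $(a+b)^{2}\le 2a^{2}+2b^{2}$, bound the bias by Proposition~\ref{prop:f0} (using the source condition $f_{0}=Kw_{0}$) and the stochastic term by Theorem~\ref{thm:6}. Nothing is missing; the paper's argument is the same bias--variance packaging with $C_{0}=\|w_{0}\|_{k}^{2}$.
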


\begin{proof}
By involving Proposition~\ref{prop:f0} in combination with Theorem~\ref{thm:6}
we have that 
\begin{align*}
\E\|f_{0}-\hat{f}_{n}\|_{k}^{2} & \le\E\big(\|f_{0}-f_{\lambda}\|_{k}+\|\hat{f}_{n}-f_{\lambda}\|_{k}\big)^{2}\\
 & \le2\|f_{0}-f_{\lambda}\|_{k}^{2}+2\E\|\hat{f}_{n}-f_{\lambda}\|_{k}^{2}\\
 & \le2\lambda^{2}\|w_{0}\|_{k}^{2}+\frac{4C_{1}}{\lambda^{2}\,n}+\frac{C_{2}}{\lambda^{3}\,n}
\end{align*}
and hence the assertion with $C_{0}\coloneqq\|w_{0}\|_{k}^{2}$.
\end{proof}
As above, we have the following corollary.
\begin{cor}[Convergence in $L^{2}$]
For $f_{0}=Kw_{0}\in\mathcal{H}_{k}$ there are constants $C_{0}$,
$C_{1}$ and $C_{2}$, all independent of $n$ and $\lambda$, so
that that
\[
\E\|f_{0}-\hat{f}_{n}\|_{2}^{2}\le C_{0}\,\lambda^{2}+\frac{4C_{1}}{\lambda^{2}\,n}+\frac{C_{2}}{\lambda^{3}\,n}.
\]
\end{cor}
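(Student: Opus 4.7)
The plan is to deduce this inequality from Theorem~\ref{thm:63} by invoking the continuous embedding $\mathcal{H}_k\hookrightarrow L^2$ supplied by Proposition~\ref{prop:Norm}. Under the hypothesis $f_0=Kw_0\in\mathcal{H}_k$, and since $\hat{f}_n(\cdot)=\frac{1}{n}\sum_{i=1}^n k(\cdot,X_i)\,\hat{w}_i$ is a finite linear combination of kernel sections and thus also lies in $\mathcal{H}_k$, the difference $f_0-\hat{f}_n$ is a member of $\mathcal{H}_k$, so the estimate $\|g\|_2\le\|K\|^{\nicefrac{1}{2}}\|g\|_k$ is applicable to $g=f_0-\hat{f}_n$.

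Concretely, I would square this pointwise (pathwise in $\omega$) inequality and take expectation, which yields
\[
\E\|f_0-\hat{f}_n\|_2^2\le \|K\|\cdot\E\|f_0-\hat{f}_n\|_k^2.
\]
Substituting the bound of Theorem~\ref{thm:63} for the right-hand side then gives
\[
\E\|f_0-\hat{f}_n\|_2^2\le \|K\|\left(2C_0\lambda^2+\frac{4C_1}{\lambda^2 n}+\frac{C_2}{\lambda^3 n}\right).
\]
The operator norm $\|K\|=\|K\colon L^2\to L^2\|$ is finite by the proposition preceding Proposition~\ref{prop:23} (the kernel is continuous and uniformly bounded, so $\iint k^2\,dP\otimes dP<\infty$), and crucially it depends only on the kernel and the design measure, not on $n$ or $\lambda$. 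Hence absorbing this factor into fresh constants $C_0\leftarrow 2\|K\|\,C_0$, $C_1\leftarrow \|K\|\,C_1$, $C_2\leftarrow \|K\|\,C_2$ produces the asserted inequality.

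There is no genuine obstacle here; the corollary is a one-line consequence of the two already-established facts that (i) convergence holds in the stronger kernel norm (Theorem~\ref{thm:63}) and (ii) $\|\cdot\|_2\le\|K\|^{\nicefrac{1}{2}}\|\cdot\|_k$ on $\mathcal{H}_k$ (Proposition~\ref{prop:Norm}). The only point worth verifying carefully is that the hypothesis $f_0\in\mathcal{H}_k$ used in Theorem~\ref{thm:63} is preserved for the corollary, which is precisely what the assumption $f_0=Kw_0$ with $w_0\in\mathcal{H}_k$ guarantees.
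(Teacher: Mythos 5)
Your proof is correct and takes essentially the same route as the paper: the corollary follows from Theorem~\ref{thm:63} by dominating the $L^{2}$-norm by the kernel norm via Proposition~\ref{prop:Norm} and absorbing the factor $\|K\|$ (which depends only on the kernel and the design measure) into the constants. The paper's own proof is only a terse one-liner, but the intended argument is exactly the norm comparison you spell out.
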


\begin{proof}
Just recall that the norm in $L^{2}$ is $\|f\|_{2}^{2}=\E|f|^{2}$.
\end{proof}

\subsection{Asymptotically optimal convergence rates and uniform approximation}

The results in the preceding section exhibit the typical bias variance
problem: the parameter~$\lambda$ in~\eqref{eq:62-1}, for example,
should be small to increase the approximation quality of $f_{\lambda}$
for $f_{0}$; on the other side, $\lambda$ should be large to improve
the approximation of $f_{\lambda}$ and the estimator $\hat{f}_{n}$.
The following statements reveal the best approximation rates asymptotically.
\begin{thm}
For $f_{0}$ in the range of $K$ and $\lambda_{n}=C\cdot n^{-\nicefrac{1}{5}}$
it holds that 
\[
\E\|f_{0}-\hat{f}_{n}\|_{k}^{2}\le\frac{C}{n^{\nicefrac{2}{5}}}.
\]
\end{thm}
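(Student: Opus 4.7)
The plan is to invoke Theorem~\ref{thm:63} and simply optimize the resulting bound in the regularization parameter~$\lambda$. Since $f_0$ lies in the range of~$K$, Theorem~\ref{thm:63} provides
\[
\E\|f_0-\hat{f}_n\|_k^2 \le 2C_0\lambda^2 + \frac{4C_1}{\lambda^2 n} + \frac{C_2}{\lambda^3 n},
\]
with $C_0,C_1,C_2$ independent of~$n$ and~$\lambda$. The task reduces to choosing~$\lambda=\lambda_n$ so that the right-hand side is as small as possible.

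The first step is to identify which terms are dominant as $\lambda\to 0$. For small~$\lambda$ the term $\lambda^{-3}/n$ grows faster than $\lambda^{-2}/n$, so the binding trade-off is between the bias contribution $\lambda^2$ and the variance contribution $\lambda^{-3}/n$. Balancing these by setting $\lambda^2 \sim \lambda^{-3}/n$ yields $\lambda^5 \sim 1/n$, which motivates the choice $\lambda_n = C\cdot n^{-1/5}$.

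With this choice I would then substitute back and check the resulting rate of each of the three summands: $\lambda_n^2 = C^2 n^{-2/5}$, $\lambda_n^{-3}/n = C^{-3} n^{3/5}/n = C^{-3} n^{-2/5}$, and the middle term $\lambda_n^{-2}/n = C^{-2} n^{2/5}/n = C^{-2} n^{-3/5}$, which is of smaller order as $n\to\infty$. Adding the three contributions and absorbing the constants into a single generic constant (also denoted~$C$, by abuse of notation, as in the statement) gives the claimed bound $\E\|f_0-\hat{f}_n\|_k^2 \le C/n^{2/5}$.

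There is no substantial obstacle here; the result is a direct corollary of Theorem~\ref{thm:63} via elementary calculus. The only subtlety worth a comment is notational: the symbol~$C$ is overloaded (once for the prefactor defining~$\lambda_n$, once for the final bound), but this is harmless since any positive prefactor in~$\lambda_n$ leads to an $O(n^{-2/5})$ bound, with the optimal choice $C = (3C_2/4C_0)^{1/5}$ minimizing the leading constant once the subdominant $\lambda^{-2}/n$ term is dropped.
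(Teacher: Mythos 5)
Your proposal is correct and follows exactly the route the paper takes: the paper's proof is the single line that the assertion derives from the bound of Theorem~\ref{thm:63}, and your balancing of the $\lambda^{2}$ and $\lambda^{-3}/n$ terms to obtain $\lambda_{n}\sim n^{-\nicefrac{1}{5}}$ and the resulting $n^{-\nicefrac{2}{5}}$ rate is precisely the intended computation. Your additional remarks on the subdominant $\lambda^{-2}/n$ term and the optimal prefactor are accurate and merely make explicit what the paper leaves implicit.
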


\begin{proof}
The assertion derives from~\eqref{eq:62-1}.
\end{proof}
The following corollary is again immediate with Proposition~\ref{prop:Norm}.
\begin{cor}[Convergence in $L^{2}$]
For $f_{0}=Kw_{0}$ it holds that 
\[
\E\|f_{0}-\hat{f}_{n}\|_{2}^{2}\le\mathcal{O}\big(n^{-\nicefrac{2}{5}}\big),
\]
provided that $\lambda_{n}=\mathcal{O}\big(n^{-\nicefrac{1}{5}}\big)$.
\end{cor}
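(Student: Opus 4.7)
The plan is essentially a one-line reduction from the kernel-norm bound of the previous theorem to the $L^2$-norm bound claimed here. The mechanism to do this has already been established in Proposition~\ref{prop:Norm}, which gives the continuous embedding inequality $\|f\|_2 \le \|K\|^{\nicefrac{1}{2}}\cdot\|f\|_k$ for every $f\in\mathcal{H}_k$; here $\|K\|$ is the operator norm of $K\colon L^2\to L^2$, a finite deterministic constant under the standing assumption that $k$ is continuous and uniformly bounded.

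Concretely, I would first note that $f_0-\hat{f}_n\in\mathcal{H}_k$ almost surely: by hypothesis $f_0=Kw_0\in\mathcal{H}_k$, and $\hat{f}_n$ is a finite linear combination of kernel sections $k(\cdot,X_i)$ and thus lies in $\mathcal{H}_k$ by construction. Squaring the embedding inequality and taking expectations then yields
\[
\E\|f_0-\hat{f}_n\|_2^2 \le \|K\|\,\E\|f_0-\hat{f}_n\|_k^2.
\]

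Next I would invoke the previous theorem with the choice $\lambda_n=\mathcal{O}(n^{-\nicefrac{1}{5}})$ to bound the right-hand side by $\mathcal{O}(n^{-\nicefrac{2}{5}})$, absorbing $\|K\|$ together with $C_0,C_1,C_2$ into the implicit constant of the $\mathcal{O}$-notation. This directly produces the claim.

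There is really no obstacle here: the whole content of the corollary is that the kernel norm dominates the $L^2$ norm (up to $\|K\|^{\nicefrac{1}{2}}$), so the rate transfers without loss. The only thing to double-check is that one may move the expectation through the deterministic constant $\|K\|$, which is trivial, and that the hypotheses of Theorem~\ref{thm:63} are indeed satisfied under the assumption $f_0=Kw_0$ with $w_0\in\mathcal{H}_k$; this is exactly the same hypothesis, so the rate from that theorem applies verbatim.
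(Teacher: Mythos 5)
Your proposal is correct and matches the paper's own argument: the paper likewise derives this corollary as ``immediate with Proposition~\ref{prop:Norm}'', i.e., by applying the embedding $\|f\|_{2}\le\|K\|^{\nicefrac{1}{2}}\|f\|_{k}$ to the kernel-norm rate of the preceding theorem and absorbing $\|K\|$ into the constant. Nothing further is needed.
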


\begin{rem}
Assuming~\eqref{eq:kMin} we found the slower rate~\eqref{eq:56}.
With that, the leading term is $\frac{C}{\lambda^{2}n}$ instead of
$\frac{C}{\lambda^{2}n}$ in~\eqref{eq:62-1} and the optimal rate
is 
\[
\E\|f_{0}-\hat{f}_{n}\|_{k}^{2}\le\frac{C}{n^{\nicefrac{1}{2}}}
\]
and by Jensen's inequality and~\eqref{eq:Point} thus 
\[
C_{k}^{-1}\|f_{0}-\hat{f}_{n}\|_{\infty}\le\E\|f_{0}-\hat{f}_{n}\|_{k}\le\frac{C}{n^{\nicefrac{1}{4}}}
\]
for $\lambda=\mathcal{O}\big(n^{-\nicefrac{1}{2}}\big)$, where $C_{k}\coloneqq\sup_{x\in\operatorname{supp}P}\sqrt{k(x,x)}$.
\end{rem}

\subsection{\label{sec:Consistency}Weak consistency}

We have seen in Theorem~\ref{thm:Consistency} that the estimator
$\hat{\vartheta}_{n}$ of the objective is downwards biased. However,
weak consistency of the estimator $\hat{\vartheta}_{n}$ is immediate
as the optimizers converge.
\begin{thm}
\label{thm:71}Given the conditions of Theorem~\ref{thm:63} it holds
that~$\hat{f}_{n}$ converges to $f_{0}$ in probability. Further,
for every $x\in\mathcal{X}$, $\hat{f}_{n}(x)\to f_{0}(x)$, as $n\to\infty$,
in probability.
\end{thm}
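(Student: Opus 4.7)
The plan is to combine the quantitative bound of Theorem~\ref{thm:63} with a choice of $\lambda_n\to 0$ at an appropriate rate, and then pass from mean-square convergence in $\|\cdot\|_k$ to convergence in probability by Chebyshev's inequality. The pointwise statement is then a direct consequence of the continuity of point evaluation established in~\eqref{eq:Point}.

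More precisely, I would first fix a sequence $\lambda_n\to 0$ such that the right-hand side of
\[
\E\|f_0-\hat{f}_n\|_k^2 \le 2C_0\lambda_n^2+\frac{4C_1}{\lambda_n^2\,n}+\frac{C_2}{\lambda_n^3\,n}
\]
tends to zero; the canonical choice $\lambda_n=C\cdot n^{-1/5}$ (as in the preceding theorem on optimal rates) does the job and yields an explicit rate $\E\|f_0-\hat{f}_n\|_k^2=\mathcal{O}(n^{-2/5})$. For arbitrary $\varepsilon>0$, Chebyshev (equivalently, Markov applied to the nonnegative random variable $\|f_0-\hat{f}_n\|_k^2$) gives
\[
\operatorname{P}\bigl(\|f_0-\hat{f}_n\|_k>\varepsilon\bigr)\le \frac{\E\|f_0-\hat{f}_n\|_k^2}{\varepsilon^2}\longrightarrow 0,
\]
which is the asserted convergence in probability in the kernel norm.

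For the pointwise statement, I would invoke the continuity inequality $|g(x)|\le\sqrt{k(x,x)}\,\|g\|_k$ from the proposition preceding Section~\ref{sec:Approx}, applied to $g=f_0-\hat{f}_n$. This yields
\[
|f_0(x)-\hat{f}_n(x)|\le \sqrt{k(x,x)}\,\|f_0-\hat{f}_n\|_k,
\]
so for every fixed $x\in\mathcal{X}$ and $\varepsilon>0$, $\{|f_0(x)-\hat{f}_n(x)|>\varepsilon\}\subset\{\|f_0-\hat{f}_n\|_k>\varepsilon/\sqrt{k(x,x)}\}$ (recall that the kernel is assumed uniformly bounded, so $k(x,x)$ is finite). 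Taking probabilities and using the norm-convergence in probability already established gives the pointwise assertion.

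The proof is essentially a soft chaining of existing ingredients, so there is no real obstacle; the only point deserving care is the explicit bookkeeping of $\lambda_n$, which must satisfy $\lambda_n\to 0$ while keeping $\lambda_n^{-3}/n\to 0$. Any polynomial rate $\lambda_n\sim n^{-\alpha}$ with $0<\alpha<1/3$ works, and $\alpha=1/5$ is asymptotically optimal in view of the rate theorem. Uniform convergence (in probability) on $\operatorname{supp}P$ moreover follows from~\eqref{eq:Point} applied identically, which could be stated as an additional immediate corollary.
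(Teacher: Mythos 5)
Your proposal is correct and follows essentially the same route as the paper: Markov's (Chebyshev's) inequality applied to the mean-square bound $\E\|f_0-\hat{f}_n\|_k^2$ from Theorem~\ref{thm:63}, with the pointwise claim obtained from the continuity of point evaluations in~\eqref{eq:Point}. If anything, your version is more careful than the paper's one-line argument, since you make explicit the required bookkeeping $\lambda_n\to 0$ with $\lambda_n^{-3}/n\to 0$, which the paper leaves implicit.
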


\begin{proof}
Indeed, by Markov's inequality, 
\[
P\big(\|f_{0}-\hat{f}_{n}\|_{k}\ge\varepsilon\big)\le\frac{1}{\varepsilon^{2}}\E\|f_{0}-\hat{f}_{n}\|_{k}^{2}\to0,
\]
as $n\to\infty$ and thus the assertion is immediate.
\end{proof}
\begin{thm}
The estimators $\hat{\vartheta}_{n}$ are $L^{2}$\nobreakdash-consistent.
\end{thm}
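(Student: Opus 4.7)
The plan is to reduce $L^{2}$-consistency of $\hat{\vartheta}_{n}$ (meaning $\E|\hat{\vartheta}_{n}-\vartheta^{*}|^{2}\to 0$) to the norm convergence $\hat{f}_{n}\to f_{\lambda}$ established in Theorem~\ref{thm:6}, by a decomposition that isolates classical sample-mean fluctuations from fluctuations of the random estimator. I would introduce the intermediate quantity
\[
\vartheta_{n}^{\star}\coloneqq\frac{1}{n}\sum_{i=1}^{n}\bigl(f_{i}-f_{\lambda}(X_{i})\bigr)^{2}+\lambda\|f_{\lambda}\|_{k}^{2},
\]
the empirical objective evaluated at the deterministic minimizer $f_{\lambda}$, and split $\hat{\vartheta}_{n}-\vartheta^{*}=(\hat{\vartheta}_{n}-\vartheta_{n}^{\star})+(\vartheta_{n}^{\star}-\vartheta^{*})$.

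The term $\vartheta_{n}^{\star}-\vartheta^{*}$ is a classical centered empirical mean, and $L^{2}$-convergence at rate $1/n$ follows from the usual law of large numbers, provided $f$ admits a finite fourth moment so that $(f-f_{\lambda}(X))^{2}$ has finite variance. The main work lies in $\hat{\vartheta}_{n}-\vartheta_{n}^{\star}$. Expanding the squares and rearranging,
\begin{align*}
\hat{\vartheta}_{n}-\vartheta_{n}^{\star} & =\frac{1}{n}\sum_{i=1}^{n}\bigl(f_{\lambda}(X_{i})-\hat{f}_{n}(X_{i})\bigr)\bigl(2f_{i}-\hat{f}_{n}(X_{i})-f_{\lambda}(X_{i})\bigr)\\
 & \qquad+\lambda\bigl(\|\hat{f}_{n}\|_{k}^{2}-\|f_{\lambda}\|_{k}^{2}\bigr).
\end{align*}
For the regularization part I would factor $\|\hat{f}_{n}\|_{k}^{2}-\|f_{\lambda}\|_{k}^{2}=(\|\hat{f}_{n}\|_{k}-\|f_{\lambda}\|_{k})(\|\hat{f}_{n}\|_{k}+\|f_{\lambda}\|_{k})$ and use the almost sure bound $\|\hat{f}_{n}\|_{k}\le(\|f\|_{2}+\varepsilon)/\sqrt{\lambda}$ from the lemma preceding Theorem~\ref{thm:Consistency} together with the reverse triangle inequality; this yields control by a constant multiple of $\sqrt{\lambda}\,\|\hat{f}_{n}-f_{\lambda}\|_{k}$, which vanishes in $L^{2}$ by Theorem~\ref{thm:6}.

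For the empirical sum I would deploy the point-evaluation bound~\eqref{eq:Point}, which gives the uniform estimate $\|\hat{f}_{n}-f_{\lambda}\|_{\infty}\le C_{k}\|\hat{f}_{n}-f_{\lambda}\|_{k}$ with $C_{k}\coloneqq\sup_{x\in\operatorname{supp}P}\sqrt{k(x,x)}$. Pulling this factor out of the sum bounds the empirical term by $C_{k}\|\hat{f}_{n}-f_{\lambda}\|_{k}\cdot M_{n}$, where $M_{n}\coloneqq\frac{1}{n}\sum_{i=1}^{n}|2f_{i}-\hat{f}_{n}(X_{i})-f_{\lambda}(X_{i})|$ has $L^{2}(\Omega)$-norm bounded uniformly in $n$ (using~\eqref{eq:Point} once more together with the almost sure bound on $\|\hat{f}_{n}\|_{k}$ and the finite second moment of $f$). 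A Cauchy\textendash Schwarz estimate in $L^{2}(\Omega)$ then combines with Theorem~\ref{thm:6} to deliver the required vanishing of the product.

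The main obstacle is the coupling between the random sample points $X_{i}$ and the random estimator $\hat{f}_{n}$ constructed from the very same sample: the ordinary $L^{2}$ law of large numbers cannot be applied directly to $\frac{1}{n}\sum_{i}(f_{i}-\hat{f}_{n}(X_{i}))^{2}$. The uniform point-evaluation estimate~\eqref{eq:Point}, a structural consequence of the reproducing kernel property, is precisely what decouples the two sources of randomness and reduces the argument to the norm convergence already established.
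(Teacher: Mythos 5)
Your overall route is the one the paper gestures at: the paper's entire proof is a one-line claim that consistency of the optimal value is ``immediate'' from convergence of the optimizer, so your decomposition through the empirical objective at $f_{\lambda}$, the point-evaluation bound~\eqref{eq:Point}, and Theorem~\ref{thm:6} is exactly the elaboration the authors leave to the reader. The classical term $\vartheta_{n}^{\star}-\vartheta^{*}$ is handled correctly, and you are right to flag that a fourth moment of $f$ is needed there (and indeed anywhere one asks for $L^{2}$-consistency of $\hat{\vartheta}_{n}$, since $\hat{\vartheta}_{n}\le\frac{1}{n}\sum_{i}f_{i}^{2}$ already forces $\E f^{4}<\infty$ for $\E\hat{\vartheta}_{n}^{2}$ to be finite).

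The gap is in the last step. $L^{2}$-consistency requires $\E\big(\hat{\vartheta}_{n}-\vartheta_{n}^{\star}\big)^{2}\to0$, and after your estimates this is a quantity of the form $\E\big[\|\hat{f}_{n}-f_{\lambda}\|_{k}^{2}\cdot(\text{random factor})^{2}\big]$, where the random factor ($M_{n}$, respectively $\|\hat{f}_{n}\|_{k}+\|f_{\lambda}\|_{k}$) is unbounded. Cauchy--Schwarz on this product calls for $\E\|\hat{f}_{n}-f_{\lambda}\|_{k}^{4}$ and $\E M_{n}^{4}$, whereas Theorem~\ref{thm:6} controls only the second moment of $\|\hat{f}_{n}-f_{\lambda}\|_{k}$; no fourth-moment analogue is available in the paper. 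Relatedly, the bound $\|\hat{f}_{n}\|_{k}\le(\|f\|_{2}+\varepsilon)/\sqrt{\lambda}$ holds only for $n\ge N(\omega,\varepsilon)$ with a \emph{random} threshold, so it cannot be inserted into an expectation as though it were deterministic; the usable pathwise bound is $\lambda\|\hat{f}_{n}\|_{k}^{2}\le\frac{1}{n}\sum_{i}f_{i}^{2}$, which is itself random and only returns you to the same product-of-unbounded-variables problem. To close the argument you need either (i) a fourth-moment strengthening of Theorem~\ref{thm:6} (under $\E f^{4}<\infty$ the computations in Sections~\ref{sec:InNorm}--\ref{sec:Convergence} can be redone at the level of fourth moments), or (ii) a truncation/uniform-integrability argument showing that $\|\hat{f}_{n}-f_{\lambda}\|_{k}^{2}M_{n}^{2}$ tends to zero in probability and is uniformly integrable. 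Either repair is routine but neither is supplied; as written, your argument yields convergence of $\hat{\vartheta}_{n}$ in probability (and in $L^{1}$ with a little more care), not $L^{2}$-consistency.
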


\begin{proof}
The assertion is immediate by Theorem~\ref{prop:Norm} and the fact
that $\hat{f}_{n}$ is optimal for~$\hat{\vartheta}_{n}$ in~\eqref{eq:Theta}.
\end{proof}

\section{\label{sec:Summary}Discussion and summary}

\todo{Stochastische Optimierung, infty norm}

This paper addresses the regression problem to learn or reconstruct
a function, the conditional expectation function, from data observed
with noise. The method investigates an unbiased functional estimator,
which reconstructs the desired function under general preconditions.
This estimator is closely related to a popular estimator employed
in machine learning, for which we develop a tight relation. We provide
results for convergence in the norm of the genuine space, the norm
associated with the reproducing kernel Hilbert space.

The norm of the reproducing kernel Hilbert space is stronger than
uniform convergence. For this reason, the results allow to estimate
functions and establish their uniform convergence. With that, the
results are just appropriate for applications in stochastic optimization,
a subject with many intersections with neural networks and deep learning. 

\medskip{}

The convergence rates presented here are in line with other results
in nonparametric statistics. However, we believe to have evidence
from numerical computations that convergence rates can be improved
and this is subject to forthcoming research. A further topic, which
this paper does not touch, is the selection of the bandwidth. As well
it would be interesting to find and characterize the limiting distribution.

Some of the results can be compared with the Nadaraya\textendash Watson
estimator (see \citet{Tsybakov} on kernel density estimation), which
builds on kernels as well to estimator the conditional expectation.
This method from nonparametric statistics has similar convergence
properties and requires an oracle on the density function to find
optimal convergence rates. 

Finally we want to mention that we have an implementation available
at github,
\begin{center}
\href{https://github.com/aloispichler/reproducing-kernel-Hilbert-space}{https://github.com/aloispichler/reproducing-kernel-Hilbert-space},
\par\end{center}

\noindent which allows assessing the theoretical results of the paper
numerically.

\section{Acknowledgment}

We wish to thank \href{https://sites.gatech.edu/alexander-shapiro/}{Prof.\ Alexander Shapiro},
Georgia Tech, and \href{https://www.tu-chemnitz.de/mathematik/ang_analysis/prof.php}{Prof.\ Tino Ullrich},
TU Chemnitz, for discussion on a draft version of the manuscript.

\bibliographystyle{abbrvnat}
\bibliography{../../Literatur/LiteraturAlois}

\end{document}